\documentclass[12pt, reqno]{amsart}
\usepackage{amsmath, amsthm, amscd, amsfonts, amssymb, graphicx, color}
\usepackage[bookmarksnumbered, colorlinks, plainpages]{hyperref}
\hypersetup{colorlinks=true,linkcolor=red, anchorcolor=green, citecolor=cyan, urlcolor=red, filecolor=magenta, pdftoolbar=true}

\textheight 22.5truecm \textwidth 14.5truecm
\setlength{\oddsidemargin}{0.35in}\setlength{\evensidemargin}{0.35in}

\setlength{\topmargin}{-.5cm}

\newtheorem{theorem}{Theorem}[section]

\newtheorem{proposition}[theorem]{Proposition}

\theoremstyle{definition}

\theoremstyle{remark}
\newtheorem{remark}[theorem]{Remark}
\numberwithin{equation}{section}

\newcommand{\be}{\begin{equation}}
\newcommand{\ee}{\end{equation}}

\newcommand{\cM}{{\mathcal M}}
\newcommand{\NN}{\mathbb{N}}

\begin{document}
\setcounter{page}{1}

\title[Inequalities and equalities on the joint and generalized spectral radius]{Inequalities and equalities on the joint and generalized spectral and essential spectral radius  of the Hadamard geometric mean of bounded sets of positive kernel operators}

\author{ Katarina Bogdanovi\'{c}$^{1}$, Aljo\v{s}a Peperko$^{2,3}$}
\date{\thanks{
Faculty of Mathematics$^1$,
University of Belgrade,
Studentski trg 16,
SRB-11000 Belgrade, Serbia,
email:   katarinabgd77@gmail.com \\
Faculty of Mechanical Engineering$^2$,
University of Ljubljana,
A\v{s}ker\v{c}eva 6,
SI-1000 Ljubljana, Slovenia,\\
Institute of Mathematics, Physics and Mechanics$^3$,
Jadranska 19,
SI-1000 Ljubljana, Slovenia \\
e-mail:   aljosa.peperko@fs.uni-lj.si
} \today}



\subjclass[2020]{47A10, 47B65, 47B34, 15A42, 15A60, 15B48}

\keywords{Hadamard-Schur geometric mean; Hadamard-Schur product; joint and generalized spectral radius; essential spectral radius; measure of noncompactness;
 positive kernel operators; non-negative matrices; bounded sets of operators}


\begin{abstract}
We prove
 new inequalities and equalities for the generalized and the joint spectral radius (and their essential versions) of Hadamard (Schur) geometric means of bounded sets of positive kernel operators on  Banach function spaces. In the case  of nonnegative matrices that define operators on  Banach sequences we obtain additional results. Our results extend  results of several authors that appeared relatively recently.
\end{abstract} \maketitle

\section{Introduction}

In \cite{Zh09}, X. Zhan conjectured that, for non-negative $N\times N$ matrices $A$ and $B$, the spectral radius $\rho (A\circ B)$ of the Hadamard product satisfies
\be
\rho (A\circ B) \le \rho (AB),
\label{qu}
\ee
where $AB$ denotes the usual matrix product of $A$ and $B$. This conjecture was confirmed by K.M.R. Audenaert in \cite{Au10}
by proving
\be
\rho (A\circ B) \le \rho ((A\circ A)(B\circ B)) ^{\frac{1}{2}}\le \rho (AB).
\label{Aud}
\ee
These inequalities were established via a trace description of the spectral radius.
Soon after, inequality (\ref{qu}) was reproved, generalized and refined in different ways by
several authors (\cite{HZ10},  \cite{Hu11}, \cite{S11}, \cite{Sc11}, \cite{P12}, \cite{CZ15}, \cite{DP16}, \cite{P17}, \cite{P17+}, \cite{BP21}).
Using the fact that the Hadamard product is a principal submatrix of
the Kronecker product, R.A. Horn and F. Zhang  proved in \cite{HZ10} the inequalities
\be
\rho (A\circ B) \le \rho (AB\circ BA)^{\frac{1}{2}}\le \rho (AB).
\label{HZ}
\ee
Applying the techniques of \cite{HZ10}, Z.  Huang proved that
\be
\rho (A_1 \circ A_2 \circ \cdots \circ A_m) \le \rho (A_1 A_2 \cdots A_m)
\label{Hu}
\ee
for $n\times n$ non-negative matrices $A_1, A_2, \cdots, A_m$ (see \cite{Hu11}).  A.R. Schep was the first one to observe that the results from \cite{DP05} and \cite{P06} are applicable in this context (see \cite{S11} and \cite{Sc11}). He extended inequalities (\ref{Aud}) and (\ref{HZ}) to non-negative matrices that define bounded
operators on sequence spaces (in particular on $l^p$ spaces, $1\le p <\infty$) and proved in
\cite[Theorem 2.7]{S11}  that
\be
\rho (A\circ B) \le \rho ((A\circ A)(B\circ B))^{\frac{1}{2}}\le  \rho (AB\circ AB)^{\frac{1}{2}}\le \rho (AB)
\label{Sproved}
\ee
(note that there was an error in the statement of \cite[Theorem 2.7]{S11}, which was corrected in \cite{Sc11} and \cite{P12}).
In  \cite{P12}, the second author of the current paper extended the inequality (\ref{Hu}) to non-negative matrices that define bounded
operators on Banach sequence spaces (see below for the exact definitions) and proved that the inequalities 
\be
\rho (A\circ B) \le \rho ((A\circ A)(B\circ B))^{\frac{1}{2}}\le \rho(AB \circ AB)^{\frac{\beta}{2}} \rho(BA \circ BA)^{\frac{1-\beta}{2}} \le \rho (AB)
\label{P1}
\ee
and
\be
\rho (A\circ B) \le \rho (AB\circ BA)^{\frac{1}{2}}\le  \rho(AB \circ AB)^{\frac{1}{4}} \rho(BA \circ BA)^{\frac{1}{4}} \le  \rho (AB).
\label{P2}
\ee
hold, where $\beta \in [0,1]$. Moreover, 
 he generalized these inequalities to the setting of the generalized and the joint spectral radius of bounded sets of such non-negative matrices. 


In \cite[Theorem 2.8]{S11},  A.R. Schep proved
 that the inequality
\be
\rho \left(A ^{\left( \frac{1}{2} \right)} \circ B  ^{\left( \frac{1}{2} \right)} \right) \le \rho (AB) ^{\frac{1}{2}}
\label{Schep}
\ee
holds for positive kernel operators on $L^p$ spaces. Here $A ^{\left( \frac{1}{2} \right)} \circ B  ^{\left( \frac{1}{2}\right)} $ denotes the Hadamard geometric mean of operators $A$ and $B$. In \cite[Theorem 3.1]{DP16}, R. Drnov\v{s}ek and the second author, generalized this inequality and proved that the inequality
\be
\rho \left(A_1^{\left(\frac{1}{m}\right)} \circ A_2^{\left(\frac{1}{m}\right)} \circ \cdots \circ
A_m^{\left(\frac{1}{m}\right)}\right)   \le \rho (A_1 A_2 \cdots A_m)^{\frac{1}{m}}
\label{genHuBfs}
\ee
holds for positive kernel operators $A_1, \ldots, A_m$ on an arbitrary Banach function space $L$.
 In \cite{P17+}, the second author refined (\ref{genHuBfs}) and showed that the inequalities
$$\rho \left(A_1^{\left(\frac{1}{m}\right)} \circ A_2^{\left(\frac{1}{m}\right)} \circ \cdots \circ
A_m^{\left(\frac{1}{m}\right)}\right) \;\;\;\;\;\;\;\;\;\;\;\;\;\;\;\;\;\;\;\;\;\;\;\;\;\;\;\;\;\;$$
\be
\le \rho  \left(P_1^{\left(\frac{1}{m}\right)} \circ P_2^{\left(\frac{1}{m}\right)} \circ \cdots \circ
P_m^{\left(\frac{1}{m}\right)}\right) ^{\frac{1}{m}}  \le \rho (A_1 A_2 \cdots A_m) ^{\frac{1}{m}} .
\label{genHuBfsP}
\ee
hold, where  $P_j = A_j \ldots A_m A_1 \ldots A_{j-1}$ for $j=1,\ldots , m$. In \cite[Theorem 3.2]{P19} , the second author showed that (\ref{genHuBfsP}) holds also for the essential radius $\rho _{ess}$ under the additional condition that $L$ and its Banach dual $L^*$ have order continuous norms. Formally, here and throughout the article $A_{j-1}=I$ for $j=1$ (eventhough $I$ might not be a well defined kernel operator). In particular, the following kernel version of  (\ref{HZ}) holds:
\be
\rho \left(A^{(\frac{1}{2})} \circ B^{(\frac{1}{2})} \right) \le  \rho \left((AB)^{(\frac{1}{2})} \circ (BA)^{(\frac{1}{2})}\right)^{\frac{1}{2}} \le \rho(AB) ^{\frac{1}{2}}.
\label{sch_refin}
\ee
Several additional closely related results, generalizations and refinements of the above results were obtained in \cite{P19, Zh18, P21, BP21}.

In \cite[Theorem 3.4]{P17} and \cite[Theorem 3.5]{P19}, the second author generalized inequalities (\ref{genHuBfs}) and (\ref{sch_refin}) and their essential version to the setting of the generalized and the joint spectral radius (and their essential versions) of bounded sets of positive kernel operators on a Banach function space (see also Theorems \ref{powers} and  
\ref{refin} below). 

The rest of the article is organized in the following way. In Section 2 we recall definitions and results that we will use in our proofs. 
In Section 3 we extend the main results of \cite{P19} by proving new inequalities and equalities for the generalized and the joint spectral radius (and their essential versions) of Hadamard (Schur) geometric means of bounded sets of positive kernel operators on  Banach function spaces (Theorems \ref{kathyprop}(i), \ref{finally}(i), \ref{kathyth1}, \ref{equalities_joint} and \ref{finally2}(i)). In the case of nonnegative matrices that define operators on  Banach sequences we prove further new inequalities that extend the main results of \cite{P12} (Theorems \ref{finally}(ii), \ref{kathyth2} and \ref{finally2}(ii)). All the inequalities mentioned above are very special instances of our results. In Section 4 we prove new results on geometric symmetrization of bounded sets of positive kernel operators  on $L^2 (X, \mu)$ and on weighted geometric symmetrization of bounded sets of nonnegative matrices that define operators on $l^2(R)$, which extend some results from \cite{Drn, P21, BP21}.

\section{Preliminaries}
\vspace{1mm}

Let $\mu$ be a $\sigma$-finite positive measure on a $\sigma$-algebra $\cM$ of subsets of a non-void set $X$.
Let $M(X,\mu)$ be the vector space of all equivalence classes of (almost everywhere equal)
complex measurable functions on $X$. A Banach space $L \subseteq M(X,\mu)$ is
called a {\it Banach function space} if $f \in L$, $g \in M(X,\mu)$,
and $|g| \le |f|$ imply that $g \in L$ and $\|g\| \le \|f\|$. Throughout the article, it is assumed that  $X$ is the carrier of $L$, that is, there is no subset $Y$ of $X$ of
 strictly positive measure with the property that $f = 0$ a.e. on $Y$ for all $f \in L$ (see \cite{Za83}).

 Let $R$ denote the set $\{1, \ldots, N\}$ for some $N \in \NN$ or the set $\NN$ of all natural numbers.
Let $S(R)$ be the vector lattice of all complex sequences $(x_n)_{n\in R}$.
A Banach space $L \subseteq S(R)$ is called a {\it Banach sequence space} if $x \in S(R)$, $y \in L$
and $|x| \le |y|$ imply that $x \in L$ and $\|x\|_L \le \|y\|_L$. Observe that a Banach sequence space is a Banach function space over a measure space $(R, \mu)$,
where $\mu$ denotes the counting measure on $R$. Denote by $\mathcal{L}$ the collection of all Banach sequence spaces
$L$ satisfying the property that $e_n = \chi_{\{n\}} \in L$ and
$\|e_n\|_L=1$ for all $n \in R$. For $L\in \mathcal{L}$ the set $R$ is the carrier of $L$.

Standard examples of Banach sequence spaces are Euclidean spaces, $l^p$ spaces for $1\le p \le \infty$,  the space $c_0\in \mathcal{L}$
of all null convergent sequences  (equipped with the usual norms and the counting measure), while standard examples of Banach function spaces are  the well-known spaces $L^p (X,\mu)$ ($1\le p \le \infty$) and other less known examples such as Orlicz, Lorentz,  Marcinkiewicz  and more general  rearrangement-invariant spaces (see e.g. \cite{BS88}, \cite{CR07}, \cite{KM99} and the references cited there), which are important e.g. in interpolation theory and in the theory of partial differential equations.
 Recall that the cartesian product $L=E\times F$
of Banach function spaces is again a Banach function space, equipped with the norm
$\|(f, g)\|_L=\max \{\|f\|_E, \|g\|_F\}$.

If $\{f_n\}_{n\in \mathbb{N}} \subset M(X,\mu)$ is a decreasing sequence and
$f=\inf\{f_n \in M(X,\mu): n \in \NN \}$, then we write $f_n \downarrow f$. 
A Banach function space $L$ has an {\it order continuous norm}, if $0\le f_n \downarrow 0$
implies $\|f_n\|_L \to 0$ as $n \to \infty$. It is well known that spaces $L^p  (X,\mu)$, $1\le p< \infty$, have order continuous
norm. Moreover, the norm of any reflexive Banach function space is
order continuous. 
In particular, we will be interested in  Banach function spaces $L$ such that $L$ and its Banach dual space $L^*$ have order continuous norms. Examples of such spaces are $L^p  (X,\mu)$, $1< p< \infty$, while the space
$L=c_0$ 
is an example of a non-reflexive Banach sequence space, such that $L$ and  $L^*=l^1$ have order continuous
norms.

By an {\it operator} on a Banach function space $L$ we always mean a linear
operator on $L$.  An operator $A$ on $L$ is said to be {\it positive}
if it maps nonnegative functions to nonnegative ones, i.e., $AL_+ \subset L_+$, where $L_+$ denotes the positive cone $L_+ =\{f\in L : f\ge 0 \; \mathrm{a.e.}\}$.
Given operators $A$ and $B$ on $L$, we write $A \ge B$ if the operator $A - B$ is positive.

Recall that a positive  operator $A$ is always bounded, i.e., its operator norm
\be
\|A\|=\sup\{\|Ax\|_L : x\in L, \|x\|_L \le 1\}=\sup\{\|Ax\|_L : x\in L_+, \|x\|_L \le 1\}
\label{equiv_op}
\ee
is finite.
Also, its spectral radius $\rho (A)$ is always contained in the spectrum.

An operator $A$ on a Banach function space $L$ is called a {\it kernel operator} if
there exists a $\mu \times \mu$-measurable function
$a(x,y)$ on $X \times X$ such that, for all $f \in L$ and for almost all $x \in X$,
$$ \int_X |a(x,y) f(y)| \, d\mu(y) < \infty \ \ \ {\rm and} \ \
   (Af)(x) = \int_X a(x,y) f(y) \, d\mu(y)  .$$
One can check that a kernel operator $A$ is positive iff
its kernel $a$ is non-negative almost everywhere.

Let $L$ be a Banach function space such that $L$ and $L^*$ have order
continuous norms and let $A$ and $B$ be  positive kernel operators on $L$. By $\gamma (A)$ we denote the Hausdorff measure of
non-compactness of $A$, i.e.,
$$\gamma (A) = \inf\left\{ \delta >0 : \;\; \mathrm{there}\;\; \mathrm{is} \;\; \mathrm{a}\;\; \mathrm{finite}\;\; M \subset L \;\;\mathrm{such} \;\; \mathrm{that} \;\; A(D_L) \subset M + \delta D_L  \right\},$$
where $D_L =\{f\in L : \|f\|_L \le 1\}$. Then $\gamma (A) \le \|A\|$, $\gamma (A+B) \le \gamma (A) + \gamma (B)$, $\gamma(AB) \le \gamma (A)\gamma (B)$ and $\gamma (\alpha A) =\alpha \gamma (A)$ for $\alpha \ge 0$. Also
$0 \le A\le B$  implies $\gamma (A) \le \gamma (B)$ (see e.g. \cite[Corollary 4.3.7 and Corollary 3.7.3]{Me91}). Let $\rho _{ess} (A)$ denote the essential spectral radius of $A$, i.e., the spectral radius of the Calkin image of $A$ in the Calkin algebra. Then
\be
 \rho _{ess} (A) =\lim _{j \to \infty} \gamma (A^j)^{1/j}=\inf _{j \in \NN} \gamma (A^j)^{1/j}
\label{esslim=inf}
\ee
and $\rho _{ess} (A) \le \gamma (A)$. Recall that if $L=L^2(X, \mu)$, then $\gamma (A^*) = \gamma (A)$ and $\rho _{ess} (A^*)=\rho _{ess} (A)  $, where $A^*$ denotes the adjoint of $A$ . Note that equalities (\ref{esslim=inf}) and  $\rho _{ess} (A^*)=\rho _{ess} (A)  $ are valid for any bounded operator $A$ on a given complex Banach space $L$ (see e.g. \cite[Theorem 4.3.13 and Proposition 4.3.11]{Me91}).


Observe that (finite or infinite) non-negative matrices, that define operators on Banach sequence spaces, are a special case of positive kernel operators
(see e.g. \cite{P12}, \cite{DP16}, \cite{DP10}, \cite{P11}, \cite{BP21}, and the references cited there).

It is well-known that kernel operators play a very important, often even central, role in a variety of applications from differential and integro-differential equations, problems from physics
(in particular from thermodynamics), engineering, statistical and economic models, etc (see e.g. \cite{J82}, \cite{BP03}, \cite{LL05}, \cite{DLR13} 
and the references cited there).
For the theory of Banach function spaces and more general Banach lattices we refer the reader to the books \cite{Za83}, \cite{BS88}, \cite{AA02}, \cite{AB85}, \cite{Me91}.

Let $A$ and $B$ be positive kernel operators on a Banach function space $L$ with kernels $a$ and $b$ respectively,
and $\alpha \ge 0$.
The \textit{Hadamard (or Schur) product} $A \circ B$ of $A$ and $B$ is the kernel operator
with kernel equal to $a(x,y)b(x,y)$ at point $(x,y) \in X \times X$ which can be defined (in general)
only on some order ideal of $L$. Similarly, the \textit{Hadamard (or Schur) power}
$A^{(\alpha)}$ of $A$ is the kernel operator with kernel equal to $(a(x, y))^{\alpha}$
at point $(x,y) \in X \times X$ which can be defined only on some order ideal of $L$.

Let $A_1 ,\ldots, A_m$ be positive kernel operators on a Banach function space $L$,
and $\alpha _1, \ldots, \alpha _m$ positive numbers such that $\sum_{j=1}^m \alpha _j = 1$.
Then the {\it  Hadamard weighted geometric mean}
$A = A_1 ^{( \alpha _1)} \circ A_2 ^{(\alpha _2)} \circ \cdots \circ A_m ^{(\alpha _m)}$ of
the operators $A_1 ,\ldots, A_m$ is a positive kernel operator defined
on the whole space $L$, since $A \le \alpha _1 A_1 + \alpha _2 A_2 + \ldots + \alpha _m A_m$ by the inequality between the weighted arithmetic and geometric means.

A matrix $A=[a_{ij}]_{i,j\in R}$ is called {\it nonnegative} if $a_{ij}\ge 0$ for all $i, j \in R$.
For notational convenience, we sometimes write $a(i,j)$ instead of $a_{ij}$.

We say that a nonnegative matrix $A$ defines an operator on $L$ if $Ax \in L$ for all $x\in L$, where
$(Ax)_i = \sum _{j \in R}a_{ij}x_j$. Then $Ax \in L_+$ for all $x\in L_+$ and
so $A$ defines a positive kernel operator on $L$.

 Let us recall  the following result which was proved in \cite[Theorem 2.2]{DP05} and
\cite[Theorem 5.1 and Example 3.7]{P06} (see also e.g. \cite[Theorem 2.1]{P17}).

\begin{theorem}
\label{thbegin}
Let $\{A_{i j}\}_{i=1, j=1}^{k, m}$ be positive kernel operators on a Banach function space $L$ and let $\alpha _1$, $\alpha _2$,..., $\alpha _m$ are positive numbers.

\noindent (i) If 
$\sum_{j=1}^{m} \alpha _j = 1$, then the positive kernel operator
\be
A:= \left(A_{1 1}^{(\alpha _1)} \circ \cdots \circ A_{1 m}^{(\alpha _m)}\right) \ldots \left(A_{k 1}^{(\alpha _1)} \circ \cdots \circ A_{k m}^{(\alpha _m)} \right)
\label{osnovno}
\ee
satisfies the following inequalities
\begin{equation}
A \le
(A_{1 1} \cdots  A_{k 1})^{(\alpha _1)} \circ \cdots
\circ (A_{1 m} \cdots A_{k m})^{(\alpha _m)} , \\
\label{norm2}
\end{equation}
\begin{eqnarray}
\nonumber
\left\|A \right\| &\le &\left\|(A_{1 1} \cdots  A_{k 1})^{(\alpha _1)} \circ \cdots
\circ (A_{1 m} \cdots A_{k m})^{(\alpha _m)} \right\|\\
&\le&\left\|A_{1 1} \cdots  A_{k 1}\right\|^{\alpha _1}\cdots\left\|A_{1 m} \cdots  A_{k m}\right\|^{\alpha _m}
\label{spectral2}
\end{eqnarray}
\begin{eqnarray}
\nonumber
\rho \left(A \right)& \le &\rho \left((A_{1 1} \cdots  A_{k 1})^{(\alpha _1)} \circ \cdots
\circ (A_{1 m} \cdots A_{k m})^{(\alpha _m)}\right)\\
&\le&\rho \left( A_{1 1} \cdots  A_{k 1} \right)^{\alpha _1} \cdots
\rho \left( A_{1 m} \cdots A_{k m}\right)^{\alpha _m} .
\label{tri}
\end{eqnarray}
If, in addition, $L$ and $L^*$ have order continuous norms, then
\begin{eqnarray}
\nonumber
\gamma (A) & \le &\gamma \left((A_{1 1} \cdots  A_{k 1})^{(\alpha _1)} \circ \cdots
\circ (A_{1 m} \cdots A_{k m})^{(\alpha _m)}\right)\\
 &\le &
 \label{meas_noncomp}
\gamma (A_{1 1} \cdots  A_{k 1})^{\alpha _1} \cdots \gamma(A_{1 m} \cdots A_{k m})^{\alpha _m}, \\
\nonumber
\rho _{ess} \left(A \right) & \le &\rho _{ess} \left((A_{1 1} \cdots  A_{k 1})^{(\alpha _1)} \circ \cdots
\circ (A_{1 m} \cdots A_{k m})^{(\alpha _m)}\right)\\
&\le  &
\label{ess_spectral}
\rho _{ess} \left( A_{1 1} \cdots  A_{k 1} \right)^{\alpha _1} \cdots
\rho _{ess} \left( A_{1 m} \cdots A_{k m}\right)^{\alpha _m} .
\end{eqnarray}

\noindent(ii) If $L\in\mathcal L$, $\sum_{j=1}^{m} \alpha _j\ge 1$ and $\{A_{i j}\}_{i=1, j=1}^{k, m}$ are nonnegative matrices that define positive operators on $L$, then $A$ from (\ref{osnovno}) defines a  positive operator on $L$ and the inequalities (\ref{norm2}), (\ref{spectral2}) and (\ref{tri}) hold.
\label{DBPfs}
\end{theorem}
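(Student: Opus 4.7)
The plan is to reduce everything to the pointwise kernel inequality (\ref{norm2}); the four operator-theoretic bounds then follow by monotonicity on the positive cone, combined with the known submultiplicativity of $\|\cdot\|$, $\rho$, $\gamma$, $\rho_{ess}$ under a single Hadamard weighted geometric mean (the $k=1$ case, analytically contained in \cite{DP05, P06}).

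Let $a_{ij}(x,y)$ denote the kernel of $A_{ij}$. The $i$-th factor $A_{i1}^{(\alpha_1)}\circ\cdots\circ A_{im}^{(\alpha_m)}$ has kernel $\prod_{j} a_{ij}(x_{i-1},x_i)^{\alpha_j}$; composing the $k$ factors and swapping the order of the two products gives
\[
a(x_0,x_k) = \int\!\cdots\!\int \prod_{j=1}^{m}\Bigl(\prod_{i=1}^{k} a_{ij}(x_{i-1},x_i)\Bigr)^{\alpha_j}\, d\mu(x_1)\cdots d\mu(x_{k-1}).
\]
Since $\sum\alpha_j=1$, the generalized H\"older inequality on the product measure $d\mu(x_1)\cdots d\mu(x_{k-1})$ with exponents $1/\alpha_j$ yields
\[
a(x_0,x_k) \le \prod_{j=1}^{m}\Bigl(\int\!\cdots\!\int \prod_{i=1}^{k} a_{ij}(x_{i-1},x_i)\,d\mu(x_1)\cdots d\mu(x_{k-1})\Bigr)^{\alpha_j},
\]
whose right-hand side is the kernel of $(A_{11}\cdots A_{k1})^{(\alpha_1)}\circ\cdots\circ(A_{1m}\cdots A_{km})^{(\alpha_m)}$, establishing (\ref{norm2}). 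The first inequality of each of (\ref{spectral2})--(\ref{ess_spectral}) is then immediate from (\ref{norm2}) by monotonicity of $\|\cdot\|$, $\rho$, $\gamma$, $\rho_{ess}$ on the positive cone (the order-continuity hypothesis on $L$ and $L^*$ is what guarantees monotonicity of $\gamma$ and $\rho_{ess}$). The second inequality in each case reduces to single-product submultiplicativity with $B_j:=A_{1j}\cdots A_{kj}$; for $\|\cdot\|$ this is one more H\"older on the single integration in $y$, giving $(B_1^{(\alpha_1)}\circ\cdots\circ B_m^{(\alpha_m)})f \le (B_1f)^{\alpha_1}\cdots(B_mf)^{\alpha_m}$ pointwise, followed by the elementary bound $\|g_1^{\alpha_1}\cdots g_m^{\alpha_m}\|_L \le \prod\|g_j\|_L^{\alpha_j}$ (valid in any Banach function space by rescaling to unit norm and applying pointwise AM--GM). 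The cases $\Phi\in\{\rho,\rho_{ess}\}$ follow by iterating via $(B_1^{(\alpha_1)}\circ\cdots)^n \le (B_1^n)^{(\alpha_1)}\circ\cdots$ (Step~1 applied with $A_{ij}=B_j$) and passing to the limit through $\rho(B)=\lim\|B^n\|^{1/n}$ and (\ref{esslim=inf}).

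For part (ii), entries of nonnegative matrices on $L\in\mathcal L$ are bounded by the operator norm (since $\|e_n\|_L=1$), so Hadamard powers and products are entrywise well-defined on all of $L$. To upgrade the constraint $\sum\alpha_j=1$ to $s:=\sum\alpha_j\ge 1$, set $\alpha_j':=\alpha_j/s$ and $F_j(i_1,\ldots,i_{k-1}):=\prod_{l=1}^k a_{lj}(i_{l-1},i_l)$, and combine H\"older (valid for the normalized $\alpha_j'$, $\sum\alpha_j'=1$) with the discrete $\ell^1\hookrightarrow\ell^s$ inequality $\sum_\iota x_\iota^s\le(\sum_\iota x_\iota)^s$ for $s\ge 1$ and $x_\iota\ge 0$:
\[
\sum_{i_1,\ldots,i_{k-1}} \prod_{j} F_j^{\alpha_j}
= \sum_{i_1,\ldots,i_{k-1}}\Bigl(\prod_{j} F_j^{\alpha_j'}\Bigr)^{s}
\le \Bigl(\sum_{i_1,\ldots,i_{k-1}} \prod_{j} F_j^{\alpha_j'}\Bigr)^{s}
\le \prod_{j}\Bigl(\sum_{i_1,\ldots,i_{k-1}} F_j\Bigr)^{\alpha_j},
\]
which is (\ref{norm2}) entrywise. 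The second inequality of (\ref{spectral2}) in the matrix setting comes from splitting $\alpha_j=\alpha_j'+\eta_j$ with $\sum\alpha_j'=1$, $\eta_j\ge 0$, bounding the $\eta_j$-factor entrywise by the constant $\prod\|B_j\|^{\eta_j}$, and invoking the $\sum=1$ norm bound on the $\alpha_j'$-factor; the $\rho$-version of (\ref{tri}) then follows from this norm inequality by iterating $C^n$ (using the matrix form of (\ref{norm2}) just established) and taking $n$-th roots.

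I expect the main technical obstacle to be making the $k=1$ submultiplicativity rigorous for $\gamma$ (and hence $\rho_{ess}$), since $\gamma$ has neither a variational description comparable to $\|\cdot\|$ nor a simple limit formula like $\rho$. This step requires a finite-rank approximation argument, made possible by the order-continuity of $L$ and $L^*$, which reduces the $\gamma$-bound to a norm bound for suitable compact perturbations; everything else in the proof is routine once this deeper ingredient is in hand.
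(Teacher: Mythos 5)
The paper itself does not prove Theorem \ref{thbegin}: it is recalled from \cite[Theorem 2.2]{DP05} and \cite[Theorem 5.1 and Example 3.7]{P06} (with the essential versions coming from the second author's later papers), so your attempt can only be measured against the proofs in those references. For (\ref{norm2}), (\ref{spectral2}), (\ref{tri}) and for part (ii) your argument is essentially that standard proof and is correct: exchanging the two products and applying generalized H\"older with exponents $1/\alpha_j$ on $d\mu(x_1)\cdots d\mu(x_{k-1})$ gives the kernel inequality (\ref{norm2}); monotonicity of $\|\cdot\|$ and $\rho$ on the positive cone gives the first inequalities in (\ref{spectral2}) and (\ref{tri}); the $k=1$ H\"older step $(B_1^{(\alpha_1)}\circ\cdots\circ B_m^{(\alpha_m)})f\le (B_1f)^{\alpha_1}\cdots(B_mf)^{\alpha_m}$ combined with $\|g_1^{\alpha_1}\cdots g_m^{\alpha_m}\|_L\le\prod_j\|g_j\|_L^{\alpha_j}$ (AM--GM after normalization) gives the norm bound, and iterating (\ref{norm2}) on powers plus $n$-th roots gives the $\rho$ bound. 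Your treatment of part (ii) -- normalizing the weights to $\alpha_j'=\alpha_j/s$, using $\sum_\iota x_\iota^s\le(\sum_\iota x_\iota)^s$ for the counting measure, the entrywise bound $a_{ij}\le\|A\|$ coming from $\|e_n\|_L=1$, and the splitting $\alpha_j=\alpha_j'+\eta_j$ for the norm inequality -- is exactly how \cite{P06} and \cite{P12} handle $\sum_j\alpha_j\ge 1$.

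The one genuine gap is the second inequality in (\ref{meas_noncomp}), and hence (\ref{ess_spectral}): you assert $\gamma(B_1^{(\alpha_1)}\circ\cdots\circ B_m^{(\alpha_m)})\le\gamma(B_1)^{\alpha_1}\cdots\gamma(B_m)^{\alpha_m}$ but supply only a one-sentence strategy. You are right that this is where the real content of the order-continuity hypotheses lies (beyond the monotonicity $0\le A\le B\Rightarrow\gamma(A)\le\gamma(B)$ quoted from \cite{Me91}), and it genuinely cannot be obtained by the sup-over-$D_L$ argument that works for the operator norm, since $\gamma$ is not evaluated on single vectors. The published proofs reduce $\gamma$ to operator norms of compressions $PBQ$ by band projections (multiplications by characteristic functions of sets exhausting $X$), using results from \cite{Me91} that hold precisely when $L$ and $L^*$ have order continuous norms; the structural fact that lets your H\"older step transfer is that such compressions commute with the Hadamard operations, $P(B_1^{(\alpha_1)}\circ\cdots\circ B_m^{(\alpha_m)})Q=(PB_1Q)^{(\alpha_1)}\circ\cdots\circ(PB_mQ)^{(\alpha_m)}$. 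Saying that the step ``requires a finite-rank approximation argument'' names the obstacle without removing it: as written, the $\gamma$ and $\rho_{ess}$ parts of the theorem remain unproved, while everything else in your proposal is complete.
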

The following result is a special case  of Theorem \ref{DBPfs}.
\begin{theorem}
\label{special_case}
Let $A_1 ,\ldots, A_m$ be positive kernel operators on a Banach function space  $L$
and $\alpha _1, \ldots, \alpha _m$ positive numbers. 

\noindent (i) If $\sum_{j=1}^m \alpha _j = 1$, then 
\be
 \|A_1 ^{( \alpha _1)} \circ A_2 ^{(\alpha _2)} \circ \cdots \circ A_m ^{(\alpha _m)} \| \le
  \|A_1\|^{ \alpha _1}  \|A_2\|^{\alpha _2} \cdots \|A_m\|^{\alpha _m}
\label{gl1nrm}
\ee
and
\be
 \rho(A_1 ^{( \alpha _1)} \circ A_2 ^{(\alpha _2)} \circ \cdots \circ A_m ^{(\alpha _m)} ) \le
\rho(A_1)^{ \alpha _1} \, \rho(A_2)^{\alpha _2} \cdots \rho(A_m)^{\alpha _m} .
\label{gl1vecr}
\ee
If, in addition, $L$ and $L^*$ have order continuous norms, then
\be
 \gamma (A_1 ^{( \alpha _1)} \circ A_2 ^{(\alpha _2)} \circ \cdots \circ A_m ^{(\alpha _m)} )\le
  \gamma(A_1)^{ \alpha _1}  \gamma(A_2)^{\alpha _2} \cdots \gamma(A_m)^{\alpha _m}
\label{gl1meas_nonc}
\ee
and
\be
 \rho _{ess}(A_1 ^{( \alpha _1)} \circ A_2 ^{(\alpha _2)} \circ \cdots \circ A_m ^{(\alpha _m)} ) \le
\rho _{ess }(A_1)^{ \alpha _1} \, \rho _{ess}(A_2)^{\alpha _2} \cdots \rho _{ess}(A_m)^{\alpha _m} .
\label{gl1vecress}
\ee
\noindent(ii) If $L\in\mathcal L$, $\sum_{j=1}^{m} \alpha _j\ge 1$ and if  $A_1 ,\ldots, A_m$ are nonnegative matrices that define positive operators on $L$, then $A_1 ^{( \alpha _1)} \circ A_2 ^{(\alpha _2)} \circ \cdots \circ A_m ^{(\alpha _m)}$ defines a positive operator on $L$ and (\ref{gl1nrm}) and (\ref{gl1vecr}) hold.

\noindent (iii)  If $L\in\mathcal L$, $t\ge1$ and if $A, A_1 ,\ldots, A_m$ are nonnegative matrices that define operators on $L$,    then $A^{(t)}$ 
 defines an  operator on L and the following inequalities hold
\be
\;\;\;\;\;A_1^{(t)}\cdots A_m^{(t)}\le(A_1\cdots A_m)^{(t)},
\label{gl1t}
\ee
\be
\rho(A_1^{(t)}\cdots A_m^{(t)})\le\rho(A_1\cdots A_m)^{t},
\label{gl1nt}
\ee
\be
\|A_1^{(t)}\cdots A_m^{(t)}\|\le\|A_1\cdots A_m\|^{t}.
\label{gl1vecrt}
\ee

\end{theorem}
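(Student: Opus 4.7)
The plan is to derive all three parts directly by specializing Theorem~\ref{DBPfs}; no additional machinery is needed, so the statement is really a corollary with a little index bookkeeping.

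For part (i), I will apply Theorem~\ref{DBPfs}(i) with $k=1$ and relabel $A_{1j}=A_j$. The operator $A$ in (\ref{osnovno}) then collapses to the Hadamard weighted geometric mean $A_1^{(\alpha_1)}\circ\cdots\circ A_m^{(\alpha_m)}$, and each product $A_{1j}\cdots A_{kj}$ reduces to $A_j$. Consequently the right-hand bounds in (\ref{spectral2}), (\ref{tri}), (\ref{meas_noncomp}) and (\ref{ess_spectral}) become exactly (\ref{gl1nrm}), (\ref{gl1vecr}), (\ref{gl1meas_nonc}) and (\ref{gl1vecress}) respectively; the middle terms of those chains coincide with their left-hand sides after the collapse, so no content is lost.

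For part (ii), I will repeat the same specialization $k=1$, but now invoking Theorem~\ref{DBPfs}(ii), which permits $\sum_{j=1}^m \alpha_j\ge 1$ provided $L\in\mathcal L$ and the operators are induced by nonnegative matrices. The positivity of the Hadamard weighted geometric mean on $L$ and the inequalities (\ref{gl1nrm}), (\ref{gl1vecr}) are then read off at once.

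For part (iii), I will apply Theorem~\ref{DBPfs}(ii) with the two indices swapped: take the inner Hadamard product to have length $1$ (that is, set the ``$m$'' of Theorem~\ref{DBPfs} equal to $1$) with single weight $\alpha_1=t\ge 1$, set the ``$k$'' of that theorem equal to $m$, and put $A_{i1}=A_i$ for $i=1,\ldots,m$. The operator $A$ in (\ref{osnovno}) then becomes $A_1^{(t)}\cdots A_m^{(t)}$ and is thereby shown to define a positive operator on $L$; inequality (\ref{norm2}) becomes (\ref{gl1t}), while (\ref{spectral2}) and (\ref{tri}) yield (\ref{gl1vecrt}) and (\ref{gl1nt}). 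The assertion that $A^{(t)}$ itself defines an operator on $L$ falls out of the same specialization taken further with $k=m=1$. The only mildly delicate point in the entire argument is keeping track of the overloaded letter $m$ in part (iii), since it counts Hadamard factors in Theorem~\ref{DBPfs} but counts the operators being multiplied in the statement under proof; once the renaming is set up, nothing else is required.
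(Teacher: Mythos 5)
Your proposal is correct and matches the paper exactly: the authors state Theorem~\ref{special_case} as ``a special case of Theorem~\ref{DBPfs}'' without further detail, and your specializations ($k=1$ for parts (i)--(ii), and swapping the roles so that the inner Hadamard product has length one with weight $t$ for part (iii)) are precisely the intended reductions. The index bookkeeping, including checking that the single weight $t\ge 1$ satisfies the hypothesis $\sum_j\alpha_j\ge 1$ of Theorem~\ref{DBPfs}(ii), is handled correctly.
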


\bigskip

Let $\Sigma$ be a bounded set of bounded operators on a complex Banach space $L$.
For $m \ge 1$, let
$$\Sigma ^m =\{A_1A_2 \cdots A_m : A_i \in \Sigma\}.$$
The generalized spectral radius of $\Sigma$ is defined by
\be
\rho (\Sigma)= \limsup _{m \to \infty} \;[\sup _{A \in \Sigma ^m} \rho (A)]^{1/m}
\label{genrho}
\ee
and is equal to
$$\rho (\Sigma)= \sup _{m \in \NN} \;[\sup _{A \in \Sigma ^m} \rho (A)]^{1/m}.$$
The joint spectral radius of $\Sigma$ is defined by
\be
\hat{\rho}  (\Sigma)= \lim _{m \to \infty}[\sup _{A \in \Sigma ^m} \|A\|]^{1/m}.
\label{BW}
\ee
Similarly, the generalized essential spectral radius of $\Sigma$ is defined by
\be
\rho _{ess} (\Sigma)= \limsup _{m \to \infty} \;[\sup _{A \in \Sigma ^m} \rho _{ess} (A)]^{1/m}
\label{genrhoess}
\ee
and is equal to
$$\rho _{ess} (\Sigma)= \sup _{m \in \NN} \;[\sup _{A \in \Sigma ^m} \rho _{ess} (A)]^{1/m}.$$
The joint essential  spectral radius of $\Sigma$ is defined by
\be
\hat{\rho} _{ess}  (\Sigma)= \lim _{m \to \infty}[\sup _{A \in \Sigma ^m} \gamma (A)]^{1/m}.
\label{jointess}
\ee

It is well known that $\rho (\Sigma)= \hat{\rho}  (\Sigma)$ for a precompact nonempty set $\Sigma$ of compact operators on $L$ (see e.g. \cite{ShT00}, \cite{ShT08}, \cite{Mo}),
in particular for a bounded set of complex $n\times n$ matrices (see e.g. \cite{BW92}, \cite{E95}, \cite{SWP97}, \cite{Dai11}, \cite{MP12}).
This equality is called the Berger-Wang formula or also the
generalized spectral radius theorem (for an elegant proof in the finite dimensional case see \cite{Dai11}).
It is known that also the generalized Berger-Wang formula holds, i.e, that for any precompact nonempty  set $\Sigma$ of bounded operators on $L$ we have
$$\hat{\rho}  (\Sigma) = \max \{\rho (\Sigma), \hat{\rho} _{ess}  (\Sigma)\}$$
(see e.g.  \cite{ShT08}, \cite{Mo},  \cite{ShT00}). Observe also that it was proved in \cite{Mo} that in the definition of  $\hat{\rho} _{ess}  (\Sigma)$ one may replace the Haussdorf measure of noncompactness by several other seminorms, for instance it may be replaced by the essential norm.

In general $\rho (\Sigma)$ and $\hat{\rho}  (\Sigma)$ may differ even in the case of a bounded set $\Sigma$ of compact positive operators on $L$ (see \cite{SWP97} or also \cite{P17}).
Also, in \cite{Gui82} the reader can find an example of two positive non-compact weighted shifts $A$ and $B$ on $L=l^2$ such that $\rho(\{A,B\})=0 < \hat{\rho}(\{A,B\})$. As already noted in \cite{ShT00} also $\rho _{ess} (\Sigma)$ and $\hat{\rho} _{ess}  (\Sigma)$ may in general be different.

The theory of the generalized and the joint spectral radius has many important applications for instance to discrete and differential inclusions,
wavelets, invariant subspace theory
(see e.g. \cite{BW92}, \cite{Dai11}, \cite{Wi02}, \cite{ShT00}, \cite{ShT08} and the references cited there).
In particular, $\hat{\rho} (\Sigma)$ plays a central role in determining stability in convergence properties of discrete and differential inclusions. In this
theory the quantity $\log \hat{\rho} (\Sigma)$ is known as the maximal Lyapunov exponent (see e.g. \cite{Wi02}).

We will  use the following well known facts that hold for all $r \in \{\rho,  \hat{\rho}, \rho _{ess}, \hat{\rho} _{ess}  \}$:
\be
r (\Sigma  ^m) = r (\Sigma)^m \;\;\mathrm{and}\;\;
r (\Psi \Sigma) = r (\Sigma\Psi)
\label{again}
\ee
where $\Psi \Sigma =\{AB: A\in \Psi, B\in \Sigma\}$ and $m\in \NN$.

Let $\Psi _1, \ldots , \Psi _m$ be bounded sets of positive kernel operators on a Banach function space $L$ and let $\alpha _1, \ldots \alpha _m$ be positive numbers such that
$\sum _{i=1} ^m \alpha _i = 1$. Then the bounded set of positive kernel operators on $L$, defined by
$$\Psi _1 ^{( \alpha _1)} \circ \cdots \circ \Psi _m ^{(\alpha _m)}=\{ A_1 ^{( \alpha _1)} \circ \cdots \circ A _m ^{(\alpha _m)}: A_1\in \Psi _1, \ldots, A_m \in \Psi _m \},$$
is called the {\it weighted Hadamard (Schur) geometric mean} of sets $\Psi _1, \ldots , \Psi _m$. The set
$\Psi _1 ^{(\frac{1}{m})} \circ \cdots \circ \Psi _m ^{(\frac{1}{m})}$ is called the  {\it Hadamard (Schur) geometric mean} of sets $\Psi _1, \ldots , \Psi _m$.

The folowing result that follows from Theorem \ref{DBPfs}(i) was established in (\cite[Theorem 3.3]{P17} and \cite[Theorems 3.1 and 3.8]{P19}.  


\begin{theorem} Let $\Psi _1, \ldots , \Psi _m$ be bounded sets of positive kernel operators on a Banach function space $L$ and let
 $\alpha _1, \ldots , \alpha _m$ be positive numbers such that \\
$\sum _{i=1} ^m \alpha _i = 1$.  If  $r \in \{\rho, \hat{\rho}\}$ and $n \in \NN$, then
\be
r (\Psi _1 ^{( \alpha _1)} \circ \cdots \circ \Psi _m ^{(\alpha _m)} ) \le  r ((\Psi _1 ^n ) ^{( \alpha _1)} \circ \cdots \circ (\Psi _m ^n) ^{(\alpha _m)} ) ^{ \frac{1}{n}} \le
r(\Psi _1)^{ \alpha _1} \, \cdots r(\Psi _m)^{\alpha _m}
\label{gsh_ref}
\ee
and
\be
r \left(\Psi _1 ^{\left( \frac{1}{m} \right)} \circ \cdots \circ \Psi _m  ^{\left( \frac{1}{m} \right)} \right) \le r(\Psi _1 \Psi _2 \cdots \Psi _m) ^{\frac{1}{m}}.\;\;\;\;\;\;\;\;
\label{Hu_ess}
\ee
If, in addition, $L$ and $L^*$ have order continuous norms, then (\ref{gsh_ref}) and (\ref{Hu_ess}) hold also  for each $r\in \{ \rho _{ess}, \hat{\rho} _{ess}\}$.
\label{powers}
\end{theorem}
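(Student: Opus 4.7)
The plan is to exploit the elementwise dominations supplied by Theorem \ref{DBPfs}(i), combined with monotonicity of $\|\cdot\|$, $\rho$, $\gamma$, and $\rho_{ess}$ on positive kernel operators, the cyclic invariance of $r$ on products from (\ref{again}), and commutativity of the Hadamard product.

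To prove (\ref{gsh_ref}), set $\Sigma := \Psi_1^{(\alpha_1)} \circ \cdots \circ \Psi_m^{(\alpha_m)}$ and $\Theta := (\Psi_1^n)^{(\alpha_1)} \circ \cdots \circ (\Psi_m^n)^{(\alpha_m)}$. An arbitrary $E \in \Sigma^{nk}$ is a product of $nk$ factors of the form $A_{j,1}^{(\alpha_1)} \circ \cdots \circ A_{j,m}^{(\alpha_m)}$ with $A_{j,i} \in \Psi_i$. I would regroup these into $k$ consecutive blocks of $n$ factors and apply (\ref{norm2}) separately to each block; since $A_{j,i} \in \Psi_i$ gives $A_{1,i} \cdots A_{n,i} \in \Psi_i^n$, each block is dominated by an element of $\Theta$, hence $E$ is dominated by an element of $\Theta^k$. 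By monotonicity of the relevant scalar quantity on positive operators, taking the sup over $\Sigma^{nk}$, then the $(nk)$-th root, and finally the limit $k \to \infty$ (or $\sup_k$), one obtains $r(\Sigma) \le r(\Theta)^{1/n}$ for $r \in \{\rho, \hat\rho\}$, and similarly for $r \in \{\rho_{ess}, \hat\rho_{ess}\}$ under the order-continuity hypothesis, with (\ref{meas_noncomp}) and (\ref{ess_spectral}) playing the role of (\ref{spectral2}) and (\ref{tri}). The second inequality in (\ref{gsh_ref}) follows by a second application of (\ref{norm2}) to an element of $\Theta^k$, which is dominated by a Hadamard mean of operators in $\Psi_i^{nk}$; separating the $\alpha_i$-powers via (\ref{spectral2})/(\ref{tri})/(\ref{meas_noncomp})/(\ref{ess_spectral}) and taking the limit in $k$ produces $r(\Theta)^{1/n} \le \prod_{i=1}^m r(\Psi_i)^{\alpha_i}$.

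Inequality (\ref{Hu_ess}) requires a cyclic reshuffling. Set $\Sigma := \Psi_1^{(1/m)} \circ \cdots \circ \Psi_m^{(1/m)}$. Given $T_1, \ldots, T_m \in \Sigma$, represent $T_j = A_{j,1}^{(1/m)} \circ \cdots \circ A_{j,m}^{(1/m)}$ with $A_{j,i} \in \Psi_i$. Using commutativity of the Hadamard product, I relabel to $T_j = \tilde A_{j,1}^{(1/m)} \circ \cdots \circ \tilde A_{j,m}^{(1/m)}$ via the cyclic shift $\sigma_j(i) := ((i+j-2) \bmod m) + 1$, setting $\tilde A_{j,i} := A_{j,\sigma_j(i)} \in \Psi_{\sigma_j(i)}$. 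Applying (\ref{norm2}) to this array yields
\[
T_1 T_2 \cdots T_m \le (\tilde A_{1,1} \tilde A_{2,1} \cdots \tilde A_{m,1})^{(1/m)} \circ \cdots \circ (\tilde A_{1,m} \tilde A_{2,m} \cdots \tilde A_{m,m})^{(1/m)},
\]
and for each fixed $i$, the index $\sigma_j(i)$ cycles through $i, i+1, \ldots, i-1$ (mod $m$) as $j = 1, \ldots, m$, whence $\tilde A_{1,i} \cdots \tilde A_{m,i} \in \Psi_i \Psi_{i+1} \cdots \Psi_{i-1}$. Iterating this block-domination on $\Sigma^{mk}$, applying the splitting estimates (\ref{spectral2})/(\ref{tri})/(\ref{meas_noncomp})/(\ref{ess_spectral}), and invoking the cyclic invariance $r(\Psi_i \Psi_{i+1} \cdots \Psi_{i-1}) = r(\Psi_1 \cdots \Psi_m)$ from (\ref{again}), the limit/sup in $k$ yields $r(\Sigma) \le r(\Psi_1 \Psi_2 \cdots \Psi_m)^{1/m}$ for $r \in \{\rho, \hat\rho\}$, and likewise for the essential versions under order continuity.

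The main obstacle is the bookkeeping in the cyclic reshuffling for (\ref{Hu_ess}): one must verify that the decoupled choices of $A_{j,\cdot}$ across different $j$ allow the products $\tilde A_{1,i} \cdots \tilde A_{m,i}$ to genuinely range over the full cyclic product set $\Psi_i \Psi_{i+1} \cdots \Psi_{i-1}$ independently for each $i$, and that the iteration on blocks of $m$ factors inside $\Sigma^{mk}$ composes compatibly with the subsequent splitting of $(1/m)$-powers so that all $m$ cyclic variants contribute equal values in the final limit. The passage to $\rho_{ess}$ and $\hat\rho_{ess}$ is then routine: replace $\|\cdot\|$ by $\gamma$ and $\rho$ by $\rho_{ess}$ throughout, using their monotonicity on positive kernel operators together with the corresponding bounds (\ref{meas_noncomp}), (\ref{ess_spectral}) from Theorem \ref{DBPfs}(i).
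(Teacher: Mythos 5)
Your proposal is correct and is essentially the argument the paper intends: Theorem \ref{powers} is quoted from \cite[Theorem 3.3]{P17} and \cite[Theorems 3.1 and 3.8]{P19} as a consequence of Theorem \ref{DBPfs}(i), and your block-by-block domination via (\ref{norm2}) followed by the splitting estimates (\ref{spectral2})--(\ref{ess_spectral}) and passage to the limit is exactly that derivation (the same device the paper itself uses for Theorems \ref{finally} and \ref{kathyth1}, where the cyclic reshuffling appears as the set identity $\Sigma^m=\Sigma_1\cdots\Sigma_m$ together with (\ref{again})). The ``obstacle'' you flag is not one: since only the inequality $\le$ is needed, it suffices that each product of elements of $\Sigma^{mk}$ be dominated by \emph{some} element of the corresponding Hadamard mean of the cyclic products, which your construction already provides.
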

The following theorem \cite[Theorem 3.5]{P19} was one of the main results in \cite{P19}.
\begin{theorem} Let $\Psi $ and $\Sigma$  be bounded sets of positive kernel operators on a Banach function space $L$.
If $r \in \{\rho, \hat{\rho}\}$ and $\beta \in [0,1]$,  then we have
$$r \left(\Psi ^{\left( \frac{1}{2} \right)} \circ \Sigma  ^{\left( \frac{1}{2} \right)} \right) \le  r \left((\Psi \Sigma)^{\left(\frac{1}{2}\right)} \circ (\Sigma \Psi)^{\left(\frac{1}{2}\right)}\right) ^{\frac{1}{2}}  $$
\be
\le   r \left((\Psi \Sigma)^{\left(\frac{1}{2}\right)} \circ (\Psi \Sigma)^{\left(\frac{1}{2}\right)}\right) ^{\frac{1}{4}}   r \left((\Sigma\Psi)^{\left(\frac{1}{2}\right)} \circ ( \Sigma \Psi)^{\left(\frac{1}{2}\right)}\right) ^{\frac{1}{4}} \le r (\Psi \Sigma) ^{\frac{1}{2}},
\label{HZPep}
\ee
$$r \left(\Psi ^{\left( \frac{1}{2} \right)} \circ \Sigma  ^{\left( \frac{1}{2} \right)} \right) \le r \left( \left (\Psi ^{\left( \frac{1}{2} \right)} \circ \Psi  ^{\left( \frac{1}{2} \right)}\right) \left (\Sigma^{\left( \frac{1}{2} \right)} \circ \Sigma  ^{\left( \frac{1}{2} \right)}\right) \right)  ^{\frac{1}{2} }   $$
\be
   \le r \left((\Psi \Sigma)^{\left(\frac{1}{2}\right)} \circ (\Psi \Sigma)^{\left(\frac{1}{2}\right)}\right) ^{\frac{\beta}{2}}   r \left((\Sigma\Psi)^{\left(\frac{1}{2}\right)} \circ ( \Sigma \Psi)^{\left(\frac{1}{2}\right)}\right) ^{\frac{1-\beta}{2}} \le  r (\Psi \Sigma) ^{\frac{1}{2}}.
\label{AudSchPep}
\ee

If, in addition, $L$ and $L^*$ have order continuous norms, then (\ref{HZPep}) and (\ref{AudSchPep}) hold also for each $r\in \{ \rho _{ess}, \hat{\rho} _{ess}\}$.
\label{refin}
\end{theorem}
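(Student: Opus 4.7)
The plan is to derive each inequality in the two chains from either a pointwise domination provided by Theorem \ref{DBPfs}(i) (combined with the commutativity of the Hadamard product) or from the set-level bounds (\ref{gsh_ref})--(\ref{Hu_ess}) of Theorem \ref{powers}. The organizing observation is the tautology
\[
r\bigl(\Xi^{(1/2)}\circ\Xi^{(1/2)}\bigr)=r(\Xi),
\]
valid for any bounded set $\Xi$ of positive kernel operators and any $r\in\{\rho,\hat{\rho},\rho_{ess},\hat{\rho}_{ess}\}$: the upper bound is (\ref{Hu_ess}) with $\Psi_1=\Psi_2=\Xi$ and $m=2$, and the lower bound comes from the inclusion $\Xi\subseteq\Xi^{(1/2)}\circ\Xi^{(1/2)}$ (take the two Hadamard factors equal). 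I shall also use repeatedly the monotonicity principle: if every element of $\Xi_1$ is pointwise dominated by some element of $\Xi_2$, then $r(\Xi_1)\le r(\Xi_2)$; this is valid for all four variants of $r$ by the monotonicity of $\|\cdot\|$, $\rho$, and $\gamma$ on positive operators.

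For (\ref{HZPep}), I establish the first inequality by taking a generic element $(A^{(1/2)}\circ B^{(1/2)})(C^{(1/2)}\circ D^{(1/2)})\in(\Psi^{(1/2)}\circ\Sigma^{(1/2)})^2$, with $A,C\in\Psi$ and $B,D\in\Sigma$, using Hadamard commutativity to rewrite the second factor as $(D^{(1/2)}\circ C^{(1/2)})$, and then applying Theorem \ref{DBPfs}(i) with $k=m=2$, $\alpha_1=\alpha_2=\tfrac12$, $A_{11}=A, A_{12}=B, A_{21}=D, A_{22}=C$ to obtain the pointwise bound $\le(AD)^{(1/2)}\circ(BC)^{(1/2)}\in(\Psi\Sigma)^{(1/2)}\circ(\Sigma\Psi)^{(1/2)}$; monotonicity combined with $r(\Xi^2)=r(\Xi)^2$ yields $r(\Psi^{(1/2)}\circ\Sigma^{(1/2)})^2\le r((\Psi\Sigma)^{(1/2)}\circ(\Sigma\Psi)^{(1/2)})$, which is the first inequality. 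The third inequality of (\ref{HZPep}) is immediate from the tautology applied to $\Xi=\Psi\Sigma$ and $\Xi=\Sigma\Psi$, together with the cyclicity $r(\Sigma\Psi)=r(\Psi\Sigma)$ of (\ref{again}). For the second inequality, a direct application of (\ref{gsh_ref}) with $\Psi_1=\Psi\Sigma$, $\Psi_2=\Sigma\Psi$, $\alpha_1=\alpha_2=\tfrac12$ gives $r((\Psi\Sigma)^{(1/2)}\circ(\Sigma\Psi)^{(1/2)})\le r(\Psi\Sigma)^{1/2}r(\Sigma\Psi)^{1/2}=r(\Psi\Sigma)$, while the tautology makes the right-hand side of the second inequality equal $r(\Psi\Sigma)^{1/2}$, so the bound reduces to $r(\Pi)^{1/2}\le r(\Psi\Sigma)^{1/2}$, which holds.

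For (\ref{AudSchPep}), set $\Phi=(\Psi^{(1/2)}\circ\Psi^{(1/2)})(\Sigma^{(1/2)}\circ\Sigma^{(1/2)})$. The first inequality follows from (\ref{Hu_ess}) ($r(\Psi^{(1/2)}\circ\Sigma^{(1/2)})^2\le r(\Psi\Sigma)$) combined with the inclusion $\Psi\Sigma\subseteq\Phi$ (write $AB=(A^{(1/2)}\circ A^{(1/2)})(B^{(1/2)}\circ B^{(1/2)})$), which gives $r(\Psi\Sigma)\le r(\Phi)$. For the second inequality, Theorem \ref{DBPfs}(i) with $k=m=2$, $\alpha_1=\alpha_2=\tfrac12$, $A_{11}=A, A_{12}=A', A_{21}=B, A_{22}=B'$ yields the pointwise bound $(A^{(1/2)}\circ A'^{(1/2)})(B^{(1/2)}\circ B'^{(1/2)})\le(AB)^{(1/2)}\circ(A'B')^{(1/2)}$, whence $r(\Phi)\le r((\Psi\Sigma)^{(1/2)}\circ(\Psi\Sigma)^{(1/2)})$; by cyclicity the symmetric bound $r(\Phi)\le r((\Sigma\Psi)^{(1/2)}\circ(\Sigma\Psi)^{(1/2)})$ holds in the same way; interpolating via the elementary inequality $\min\{x,y\}\le x^\beta y^{1-\beta}$ ($x,y\ge 0$, $\beta\in[0,1]$) and taking square roots gives the second inequality. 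The third inequality collapses exactly as in (\ref{HZPep}). The essential versions go through verbatim, because the bounds in Theorems \ref{DBPfs} and \ref{powers} and the pointwise-domination monotonicity all have the stated essential counterparts under the hypothesis that $L$ and $L^*$ have order continuous norms. The only real care required is the bookkeeping of Hadamard indices: at each step Hadamard commutativity is used to align the factors so that the chosen specialization of Theorem \ref{DBPfs}(i) lands the pointwise bound in precisely the set named on the right of the claimed inequality.
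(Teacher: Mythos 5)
Your proof is correct. Note that the paper itself offers no proof of Theorem \ref{refin}: it is quoted from \cite[Theorem 3.5]{P19}. Your reconstruction nevertheless runs on exactly the machinery this paper deploys for its own new results: the pointwise domination supplied by (\ref{norm2}) combined with the order-monotonicity of $\rho$, $\|\cdot\|$, $\gamma$ and $\rho_{ess}$ on positive operators and with $r(\Xi^2)=r(\Xi)^2$ (this is word for word the argument in the proof of Theorem \ref{finally}(i)), the set-level bounds (\ref{gsh_ref}) and (\ref{Hu_ess}), and the inclusion trick $\Xi\subseteq\Xi^{(1/2)}\circ\Xi^{(1/2)}$. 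Your ``tautology'' $r(\Xi^{(1/2)}\circ\Xi^{(1/2)})=r(\Xi)$ is precisely the paper's equality (\ref{uno}) with $m=2$, and your observation that the final inequality of each chain and the interpolation step collapse to equalities is exactly the content of (\ref{duo}) and of the remark following Theorem \ref{kathyprop}. The only blemishes are cosmetic: the symbol $\Pi$ appears once without being introduced (it must mean $(\Psi\Sigma)^{(1/2)}\circ(\Sigma\Psi)^{(1/2)}$), and the interpolation via $\min\{x,y\}\le x^{\beta}y^{1-\beta}$ is superfluous, since by your own tautology and (\ref{again}) the two quantities being interpolated are both equal to $r(\Psi\Sigma)$.
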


Given $L\in \mathcal{L}$, let  $\Psi _1, \ldots , \Psi _m$ be  bounded sets of nonnegative matrices that define operators on $L$ and let $\alpha _1, \ldots , \alpha _m$ be positive numbers such that
$\sum _{i=1} ^m \alpha _i \ge 1$. Then  the set
$$\Psi _1 ^{( \alpha _1)} \circ \cdots \circ \Psi _m ^{(\alpha _m)}=\{ A_1 ^{( \alpha _1)} \circ \cdots \circ A _m ^{(\alpha _m)}: A_1\in \Psi _1, \ldots, A_m \in \Psi _m \}$$
is a bounded set of   nonnegative matrices that define operators on $L$ by Theorem \ref{special_case}(ii).
By applying Theorem \ref{DBPfs}(ii), one can also prove the following result in a similar way as  \cite[Theorem 3.8]{P19}. We omit the details of the proof.
\begin{theorem}
\label{for_matrices}
Given $L\in\mathcal L$, let $\Psi, \Psi _1, \ldots , \Psi _m$ be bounded sets of nonnegative matrices that define operators on $L$. Let $\alpha_1, \ldots , \alpha _m$ be  positive numbers such that $\sum_{j=1}^{m}\alpha_j\ge1$, $n \in \NN$ and $r \in \{\rho, \hat{\rho}\}$. Then Inequalities (\ref{gsh_ref}) hold.

In particular, if $t\ge1$, then
\be
r (\Psi^{(t)})\le r((\Psi^n)^{(t)})^{\frac{1}{n}}\le r(\Psi)^{t}.
\label{folge}
\ee
\end{theorem}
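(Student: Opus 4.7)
The plan is to imitate the proof of \cite[Theorem 3.8]{P19} (which established Theorem \ref{powers} for positive kernel operators), substituting Theorem \ref{DBPfs}(ii) for Theorem \ref{DBPfs}(i). The crucial point is that Theorem \ref{DBPfs}(ii) grants the entrywise bound (\ref{norm2}) together with (\ref{spectral2}) and (\ref{tri}) for nonnegative matrices that define operators on $L\in\mathcal L$ under the weaker hypothesis $\sum_{j=1}^m \alpha_j\ge 1$, so the same ladder of estimates goes through without further modification.

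Write $\Sigma := \Psi_1^{(\alpha_1)}\circ\cdots\circ\Psi_m^{(\alpha_m)}$ and $\Sigma_n := (\Psi_1^n)^{(\alpha_1)}\circ\cdots\circ(\Psi_m^n)^{(\alpha_m)}$; by Theorem \ref{special_case}(ii) both are bounded sets of nonnegative matrices that define operators on $L$, with uniform norm bounds coming from (\ref{gl1nrm}). For the first inequality in (\ref{gsh_ref}), any $B\in\Sigma^n$ has the form $(A_{1,1}^{(\alpha_1)}\circ\cdots\circ A_{1,m}^{(\alpha_m)})\cdots(A_{n,1}^{(\alpha_1)}\circ\cdots\circ A_{n,m}^{(\alpha_m)})$ with $A_{i,j}\in\Psi_j$, and (\ref{norm2}) yields
$$B \le (A_{1,1}\cdots A_{n,1})^{(\alpha_1)}\circ\cdots\circ (A_{1,m}\cdots A_{n,m})^{(\alpha_m)} \in \Sigma_n$$
entrywise. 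Iterating this blockwise over $k$ copies, every $B\in\Sigma^{nk}$ is entrywise dominated by some element of $\Sigma_n^k$. Monotonicity of $\rho$ and of the operator norm on the positive cone of $L$ then gives $\sup_{B\in\Sigma^{nk}} \rho(B) \le \sup_{C\in\Sigma_n^k}\rho(C)$, and likewise for $\|\cdot\|$. Combining with $r(\Sigma)^n=r(\Sigma^n)$ from (\ref{again}) and taking the sup (respectively the limit) over $k$ furnishes $r(\Sigma)^n \le r(\Sigma_n)$ for $r\in\{\rho,\hat\rho\}$, i.e.\ the first inequality.

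For the second inequality, I take $C\in\Sigma_n^k$ and write $C=C_1\cdots C_k$ with $C_i = A_{i,1}^{(\alpha_1)}\circ\cdots\circ A_{i,m}^{(\alpha_m)}$ and $A_{i,j}\in\Psi_j^n$. Applying (\ref{tri}) (resp.\ (\ref{spectral2})) to these $km$ matrices gives
$$\rho(C)\le\prod_{j=1}^m \rho(A_{1,j}\cdots A_{k,j})^{\alpha_j}, \qquad \|C\|\le \prod_{j=1}^m \|A_{1,j}\cdots A_{k,j}\|^{\alpha_j}.$$
Since each product $A_{1,j}\cdots A_{k,j}$ lies in $\Psi_j^{nk}$, the $j$-th factor is bounded by $\sup_{D\in\Psi_j^{nk}}\rho(D)$ (resp.\ $\sup_{D\in\Psi_j^{nk}}\|D\|$). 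Taking $k$-th roots, factoring out the exponent $n\alpha_j$ as $(nk)\alpha_j/k$, and finally passing to $\sup_k$ for $r=\rho$ (and to $\lim_k$ for $r=\hat\rho$, using that $[\sup_{D\in\Psi_j^{nk}}\|D\|]^{1/(nk)}\to\hat\rho(\Psi_j)$) produces $r(\Psi_j)^{n\alpha_j}$ in each factor, giving $r(\Sigma_n)\le\prod_j r(\Psi_j)^{n\alpha_j}$ and hence $r(\Sigma_n)^{1/n}\le\prod_j r(\Psi_j)^{\alpha_j}$.

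Inequalities (\ref{folge}) are the specialization $m=1$, $\Psi_1=\Psi$, $\alpha_1=t\ge 1$, since then $\Psi_1^{(\alpha_1)}\circ\cdots\circ\Psi_m^{(\alpha_m)}=\Psi^{(t)}$ and $(\Psi_1^n)^{(\alpha_1)}\circ\cdots\circ(\Psi_m^n)^{(\alpha_m)}=(\Psi^n)^{(t)}$. There is no serious obstacle here; the only technical care needed is the asymmetry between the sup characterization of $\rho(\Sigma)$ and the limit characterization of $\hat\rho(\Sigma)$, which forces one to take the $(nk)$-th roots before letting $k\to\infty$ when handling the joint spectral radius.
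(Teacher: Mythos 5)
Your proposal is correct and follows precisely the route the paper indicates: the paper itself omits the details, stating only that the result is "proved in a similar way as [P19, Theorem 3.8] by applying Theorem \ref{thbegin}(ii)", which is exactly the substitution you carry out, and your filled-in ladder of estimates (domination of elements of $\Sigma^{nk}$ by elements of $\Sigma_n^k$ via (\ref{norm2}), then (\ref{tri}) and (\ref{spectral2}) with the $(nk)$-th-root bookkeeping) is sound.
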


\section{Further inequalities and equalities}

In \cite{P19} and later it remained unnoticed that several inequalities in Theorem \ref{refin}  are in fact equalities, which is established in the following result.  
\begin{theorem}
\label{kathyprop}
Let $\Psi$ and $\Sigma$ be bounded sets of positive kernel operators on a Banach function space $L$ and let  $\alpha_1, \ldots , \alpha _m$ be positive numbers such that $\sum_{j=1}^{m}\alpha_j=1$.

\noindent (i) If $r \in \{\rho, \hat{\rho}\}$ and $\beta \in [0,1]$, then
\be
r(\Psi)=r(\Psi^{(\alpha_1)}\circ\cdots\circ \Psi^{(\alpha_m)})
\label{uno}
\ee
and
\begin{eqnarray}
\nonumber
r(\Psi\Sigma)&=&r((\Psi^{(\frac{1}{2})}\circ\Psi^{(\frac{1}{2})})(\Sigma^{(\frac{1}{2})}
\circ\Sigma^{(\frac{1}{2})}))\\
&=& r \left((\Psi \Sigma)^{\left(\frac{1}{2}\right)} \circ (\Psi \Sigma)^{\left(\frac{1}{2}\right)}\right) ^{\beta}   r \left((\Sigma\Psi)^{\left(\frac{1}{2}\right)} \circ ( \Sigma \Psi)^{\left(\frac{1}{2}\right)}\right) ^{1-\beta}.
\label{duo}
\end{eqnarray}
If, in addition, $L$ and $L^*$ have order continuous norms, then (\ref{uno}) and (\ref{duo}) hold also for each $r\in \{ \rho _{ess}, \hat{\rho} _{ess}\}$.

\medskip

(ii) If $L\in\mathcal L$, $r \in \{\rho, \hat{\rho}\}$, $ m,n \in\NN$, $\alpha\ge 1$ and if $\Psi$ is 
 a  bounded set of nonnegative matrices that define operators on $L$, then
\be
r(\Psi^{(m)})\le r(\Psi\circ\cdots\circ\Psi)\le r(\Psi ^n \circ\cdots\circ\Psi^n)^{\frac{1}{n}}\le r(\Psi)^{m},
\label{tre}
\ee
where in (\ref{tre}) the Hadamard products in $\Psi\circ\cdots\circ\Psi$ and in $\Psi ^n\circ\cdots\circ\Psi ^n$ are taken $m$ times, and
\be
r(\Psi^{(\alpha)})\le r(\Psi^{(\alpha-1)}\circ\Psi)\le  r((\Psi ^n)^{(\alpha-1)}\circ\Psi ^n)^{\frac{1}{n}} \le r(\Psi)^{\alpha}.
\label{tre1}
\ee
\end{theorem}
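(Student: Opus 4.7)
The pivotal observation, overlooked in \cite{P19}, is a trivial diagonal inclusion: for any $A \in \Psi$ and positive weights with $\sum_{j=1}^m \alpha_j = 1$, the choice $A_1 = \cdots = A_m = A$ gives $A_1^{(\alpha_1)} \circ \cdots \circ A_m^{(\alpha_m)} = A^{(\sum_j \alpha_j)} = A$. This yields the inclusion
\[
\Psi \subseteq \Psi^{(\alpha_1)} \circ \cdots \circ \Psi^{(\alpha_m)},
\]
and since each $r \in \{\rho, \hat{\rho}, \rho_{ess}, \hat{\rho}_{ess}\}$ is monotone under set inclusion (immediate from (\ref{genrho}), (\ref{BW}), (\ref{genrhoess}), (\ref{jointess})), I conclude $r(\Psi) \le r(\Psi^{(\alpha_1)} \circ \cdots \circ \Psi^{(\alpha_m)})$. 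The reverse inequality is precisely (\ref{gsh_ref}) of Theorem \ref{powers} applied with $\Psi_1 = \cdots = \Psi_m = \Psi$, which yields the upper bound $r(\Psi)^{\alpha_1} \cdots r(\Psi)^{\alpha_m} = r(\Psi)$. Together these give (\ref{uno}).

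For (\ref{duo}), the plan is to apply (\ref{uno}) with $m=2$ and $\alpha_1 = \alpha_2 = 1/2$ to the bounded sets $\Psi\Sigma$ and $\Sigma\Psi$; combined with $r(\Sigma\Psi) = r(\Psi\Sigma)$ from (\ref{again}), this produces
\[
r\bigl((\Psi\Sigma)^{(1/2)} \circ (\Psi\Sigma)^{(1/2)}\bigr) = r\bigl((\Sigma\Psi)^{(1/2)} \circ (\Sigma\Psi)^{(1/2)}\bigr) = r(\Psi\Sigma),
\]
from which the second equality of (\ref{duo}) follows immediately since $\beta + (1-\beta) = 1$. For the middle equality, I use the same diagonal inclusion to get $\Psi\Sigma \subseteq (\Psi^{(1/2)}\circ\Psi^{(1/2)})(\Sigma^{(1/2)}\circ\Sigma^{(1/2)})$, hence $r(\Psi\Sigma) \le r((\Psi^{(1/2)}\circ\Psi^{(1/2)})(\Sigma^{(1/2)}\circ\Sigma^{(1/2)}))$, while the reverse inequality is already encoded in the chain (\ref{AudSchPep}) of Theorem \ref{refin}. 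Under the additional order-continuity hypothesis on $L$ and $L^*$, the entire argument runs verbatim for $r \in \{\rho_{ess}, \hat{\rho}_{ess}\}$ because the essential-radius versions of Theorems \ref{powers} and \ref{refin} are available in that setting.

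For part (ii), the leftmost inequalities in (\ref{tre}) and (\ref{tre1}) are again diagonal inclusions: $A^{(m)} = A \circ \cdots \circ A$ yields $\Psi^{(m)} \subseteq \Psi \circ \cdots \circ \Psi$ ($m$ factors), and $A^{(\alpha)} = A^{(\alpha-1)} \circ A$ yields $\Psi^{(\alpha)} \subseteq \Psi^{(\alpha-1)} \circ \Psi$. The remaining two inequalities in each chain are direct instances of Theorem \ref{for_matrices}: for (\ref{tre}) I take $\Psi_1 = \cdots = \Psi_m = \Psi$ with $\alpha_1 = \cdots = \alpha_m = 1$ (so $\sum_j \alpha_j = m \ge 1$); for (\ref{tre1}), assuming $\alpha > 1$, I take $m = 2$, $\Psi_1 = \Psi_2 = \Psi$, $\alpha_1 = \alpha - 1 > 0$, $\alpha_2 = 1$ (so $\sum_j \alpha_j = \alpha \ge 1$). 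The boundary case $\alpha = 1$ of (\ref{tre1}), where $\Psi^{(0)}$ is not a priori meaningful, collapses to the trivial chain $r(\Psi) \le r(\Psi) \le r(\Psi)$. I anticipate no serious obstacle: the entire statement is a short bookkeeping consequence of Theorems \ref{powers}, \ref{refin}, and \ref{for_matrices} once one notices that the natural upper estimates of \cite{P19} become equalities whenever all of the sets entering the Hadamard geometric mean coincide.
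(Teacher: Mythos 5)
Your proposal is correct and follows essentially the same route as the paper: the key observation in both is the diagonal inclusion $\Psi\subseteq\Psi^{(\alpha_1)}\circ\cdots\circ\Psi^{(\alpha_m)}$ (and its analogues for $\Psi\Sigma$ and $\Psi^{(m)}$), sandwiched against the upper bounds from Theorems \ref{powers}, \ref{refin} and \ref{for_matrices}. Your derivation of the second equality in (\ref{duo}) via (\ref{uno}) applied to $\Psi\Sigma$ and $\Sigma\Psi$ is a minor reorganization of the paper's single sandwich chain through (\ref{AudSchPep}), and your explicit treatment of the boundary case $\alpha=1$ in (\ref{tre1}) is a small point of added care not spelled out in the paper.
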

\begin{proof}
(i) To prove (\ref{uno}) first observe that
$ \Psi\subset\Psi^{(\alpha_1)}\circ\cdots\circ\Psi^{(\alpha_m)}$, since $ A=A^{(\alpha_1)}\circ\cdots\circ
A^{(\alpha_m)}$ for all $A\in\Psi $. It follows that $$r(\Psi)\le r(\Psi^{(\alpha_1)}\circ\cdots\circ\Psi^{(\alpha_m)})
\le r(\Psi)^{\alpha_1}\cdots r(\Psi)^{\alpha_m}=r(\Psi)$$ by Theorem \ref{powers} and so $r(\Psi)=r(\Psi^{(\alpha_1)}\circ\cdots\circ\Psi^{(\alpha_m)})$.

Similary, to prove (\ref{duo}) observe that $\Psi\Sigma\subset(\Psi^{(\frac{1}{2})}\circ\Psi^{(\frac{1}{2})})(\Sigma^{(\frac{1}{2})}\circ\Sigma^
{(\frac{1}{2})})$, since $AB=(A^{(\frac{1}{2})}\circ A^{(\frac{1}{2})})(B^{(\frac{1}{2})}\circ B^{(\frac{1}{2})})$ for all $A\in\Psi$ and $B\in\Sigma$. It follows that 
\begin{eqnarray}
\nonumber
r(\Psi\Sigma)&\le &r((\Psi^{(\frac{1}{2})}\circ\Psi^{(\frac{1}{2})})(\Sigma^{(\frac{1}{2})}
\circ\Sigma^{(\frac{1}{2})}))\\
&\le & r \left((\Psi \Sigma)^{\left(\frac{1}{2}\right)} \circ (\Psi \Sigma)^{\left(\frac{1}{2}\right)}\right) ^{\beta}   r \left((\Sigma\Psi)^{\left(\frac{1}{2}\right)} \circ ( \Sigma \Psi)^{\left(\frac{1}{2}\right)}\right) ^{1-\beta} \le r(\Psi\Sigma)
\nonumber
\end{eqnarray}
by (\ref{AudSchPep}), which proves (\ref{duo}). It is proved similarly that (\ref{uno}) and (\ref{duo}) hold also for each $r\in \{ \rho _{ess}, \hat{\rho} _{ess}\}$ in the case when $L$ and $L^*$ have order continuous norms.

(ii) For the proof of (\ref{tre}) observe that $\Psi^{(m)}\subset\Psi\circ\cdots\circ\Psi$, since $A^{(m)}=A\circ\cdots\circ A$ for all $A\in\Psi$. By Theorem \ref{for_matrices}, Inequalities (\ref{tre}) follow. Inequalities (\ref{tre1}) are proved in a similar way. 
\end{proof}
\begin{remark}{\rm
Equalities (\ref{duo}) show that  the third inequality in (\ref{HZPep}) and the second and third inequality in (\ref{AudSchPep})  are in fact equalities.

This also implies that (only) \cite[Remark 3.6]{P19} is false. Indeed, 
\cite[Example 3.11]{P12} is not an example  that would support the claim stated in \cite[Remark 3.6]{P19}. The second author of this article regrets for  stating this false remark in \cite{P19}.
}
\end{remark}

The following result extends Inequalities (\ref{tri}) and (\ref{gsh_ref}) and Theorem \ref{for_matrices}.
\begin{theorem}
\label{finally}
Let $\{\Psi _{i j}\}_{i=1, j=1}^{k, m}$ be bounded sets of positive kernel operators on a Banach function space $L$ and let
 $\alpha _1, \ldots , \alpha _m$ be positive numbers.   \\

\noindent (i) If  $r \in \{\rho, \hat{\rho}\}$, $\sum _{i=1} ^m \alpha _i = 1$ and $n \in \NN$, then
\begin{eqnarray}
\nonumber
& & r \left(\left(\Psi_{1 1}^{(\alpha _1)} \circ \cdots \circ \Psi_{1 m}^{(\alpha _m)}\right) \ldots \left(\Psi_{k 1}^{(\alpha _1)} \circ \cdots \circ \Psi_{k m}^{(\alpha _m)} \right) \right) \\
\nonumber
& \le &r \left((\Psi_{1 1} \cdots  \Psi_{k 1})^{(\alpha _1)} \circ \cdots
\circ (\Psi_{1 m} \cdots \Psi_{k m})^{(\alpha _m)}\right)\\
\nonumber
& \le &r \left(((\Psi_{1 1} \cdots  \Psi_{k 1})^n)^{(\alpha _1)} \circ \cdots
\circ ((\Psi_{1 m} \cdots \Psi_{k m})^n)^{(\alpha _m)}\right)^{\frac{1}{n}}\\
&\le&r \left( \Psi_{1 1} \cdots  \Psi_{k 1} \right)^{\alpha _1} \cdots
r \left( \Psi_{1 m} \cdots \Psi_{k m}\right)^{\alpha _m} .
\label{lepa}
\end{eqnarray}
If, in addition, $L$ and $L^*$ have order continuous norms, then Inequalities (\ref{lepa}) hold also for each $r\in \{ \rho _{ess}, \hat{\rho} _{ess}\}$.

\noindent(ii) If $L\in\mathcal L$, $r \in \{\rho, \hat{\rho}\}$, $\sum_{j=1}^{m} \alpha _j\ge 1$ and $\{\Psi_{i j}\}_{i=1, j=1}^{k, m}$ are bounded sets of nonnegative matrices that define positive operators on $L$, then Inequalities (\ref{lepa}) hold. 

In particular, if $\Psi _1, \ldots, \Psi_k$ are bounded sets of nonnegative matrices that define positive operators on $L$ and
 $t\ge1$, then
\be
r (\Psi _1 ^{(t)} \cdots \Psi _k ^{(t)})\le r ((\Psi _1 \cdots \Psi _k) ^{(t)}) \le r(((\Psi _1 \cdots \Psi _k)^n)^{(t)})^{\frac{1}{n}}\le r(\Psi _1 \cdots \Psi _k)^{t}.
\label{with_t}
\ee 
\end{theorem}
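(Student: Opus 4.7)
The plan is to bootstrap from the operator-level pointwise inequality (\ref{norm2}) of Theorem~\ref{DBPfs} to the set-level chain (\ref{lepa}), using Theorem~\ref{powers} (for part (i)) or Theorem~\ref{for_matrices} (for part (ii)) to supply the middle inequalities. To streamline the argument, I will write $\Xi_j := \Psi_{1j}\Psi_{2j}\cdots\Psi_{kj}$ for $j = 1,\ldots,m$; then the second and third expressions in (\ref{lepa}) are exactly the two sides of the chain (\ref{gsh_ref}) applied to the bounded sets of positive kernel operators $\Xi_1,\ldots,\Xi_m$.

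For the first inequality in (\ref{lepa}) I would fix $n \in \NN$ and an arbitrary element of the $n$-th power of the left-hand set, parametrised by a choice $A_{ij}^{(s)} \in \Psi_{ij}$ for $s = 1,\ldots,n$. Setting
\[
B_s := \prod_{i=1}^{k}\Bigl(\bigl(A_{i1}^{(s)}\bigr)^{(\alpha_1)} \circ \cdots \circ \bigl(A_{im}^{(s)}\bigr)^{(\alpha_m)}\Bigr)
\]
and letting $B_s'$ denote the corresponding element of the second set (obtained from the same data by interchanging Hadamard means and ordinary products), inequality (\ref{norm2}) applied to each $s$ yields $0 \le B_s \le B_s'$. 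Since multiplication of positive operators preserves the order, $B_1 \cdots B_n \le B_1' \cdots B_n'$, and monotonicity of $\rho$, $\|\cdot\|$ and $\gamma$ (and hence of $\rho_{ess}$, via (\ref{esslim=inf})) on positive operators yields $r(B_1 \cdots B_n) \le r(B_1' \cdots B_n')$ for the four relevant quantities. Taking suprema over the parameter tuples and passing to the appropriate $n$-th root and limit in (\ref{genrho})--(\ref{jointess}) then delivers the first inequality for all of $r \in \{\rho, \hat{\rho}, \rho_{ess}, \hat{\rho}_{ess}\}$.

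The second and third inequalities in (\ref{lepa}) follow directly from Theorem~\ref{powers}: instantiating (\ref{gsh_ref}) with $\Psi_j$ replaced by $\Xi_j$ reproduces the two required inequalities for $r \in \{\rho, \hat{\rho}\}$ and, under the order-continuity hypothesis, also for $r \in \{\rho_{ess}, \hat{\rho}_{ess}\}$. Part (ii) proceeds identically, with Theorem~\ref{DBPfs}(ii) providing the pointwise inequality under the weaker assumption $\sum_j \alpha_j \ge 1$ and Theorem~\ref{for_matrices} replacing Theorem~\ref{powers} at the set level. Specialising to $m = 1$ and $\alpha_1 = t \ge 1$ (so that the Hadamard geometric mean collapses to a single Hadamard power) immediately yields (\ref{with_t}).

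The only step that I expect to require genuine care is the transfer of the pointwise inequality $B_s \le B_s'$ through the $n$-fold product into the generalized/joint (essential) spectral radii: one must verify that $B_s'$ is indexed by the \emph{same} parameters $A_{ij}^{(s)}$ as $B_s$, so that the dominating product $B_1'\cdots B_n'$ actually lies in the $n$-th power of the dominating set, and consequently the suprema on both sides can be compared compatibly. Once this bookkeeping is recorded, the remainder is a direct assembly of the quoted results.
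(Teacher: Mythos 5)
Your proposal is correct and follows essentially the same route as the paper: the first inequality is obtained by dominating each factor of an element of the $l$-th power of the left-hand set via (\ref{norm2}) and using monotonicity of $\rho$, $\|\cdot\|$ and $\gamma$ before taking suprema and roots, and the remaining inequalities are read off from (\ref{gsh_ref}) applied to the sets $\Psi_{1j}\cdots\Psi_{kj}$, with Theorems \ref{thbegin}(ii) and \ref{for_matrices} substituted in part (ii). The only cosmetic caveat is that you reuse the letter $n$ both for the exponent in (\ref{lepa}) and for the power index in the spectral-radius limits; the paper uses a separate index $l$ for the latter.
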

\begin{proof} (i) Let  $r \in \{\rho, \hat{\rho}\}$, $\sum _{i=1} ^m \alpha _i = 1$ and $n \in \NN$. To prove the first inequality in (\ref{lepa}) let $l\in \NN$ and 
$$A \in \left(\left(\Psi_{1 1}^{(\alpha _1)} \circ \cdots \circ \Psi_{1 m}^{(\alpha _m)}\right) \ldots \left(\Psi_{k 1}^{(\alpha _1)} \circ \cdots \circ \Psi_{k m}^{(\alpha _m)} \right)\right)^l.$$
Then $A=A_1 \cdots A_l$, where for each $i=1, \ldots , l$ we have
$$A_i = \left(A_{i1 1}^{(\alpha _1)} \circ \cdots \circ A_{i 1 m}^{(\alpha _m)}\right) \ldots \left(A_{i k 1}^{(\alpha _1)} \circ \cdots \circ A_{i k m}^{(\alpha _m)} \right),$$
where $A_{i1 1} \in \Psi_{1 1}, \ldots , A_{i1 m} \in \Psi_{1 m}, \ldots , A_{ik 1} \in \Psi_{k 1}, \ldots , A_{ikm} \in \Psi_{km}. $ 
Then by (\ref{norm2}) for each $i=1, \ldots , l$ we have
$$A_i \le C_i := (A_{i11}A_{i21} \cdots A_{ik1} )^{(\alpha_1)} \circ \cdots \circ (A_{i1m}A_{i2m} \cdots A_{ikm} )^{(\alpha_m)}, $$
where $C_i \in (\Psi_{1 1} \cdots  \Psi_{k 1})^{(\alpha _1)} \circ \cdots
\circ (\Psi_{1 m} \cdots \Psi_{k m})^{(\alpha _m)}.$ Therefore 
$$A \le C:=C_1 \cdots C_l \in \left((\Psi_{1 1} \cdots  \Psi_{k 1})^{(\alpha _1)} \circ \cdots
\circ (\Psi_{1 m} \cdots \Psi_{k m})^{(\alpha _m)}\right)^l,$$
$\rho(A)^{1/l}\le \rho(C)^{1/l}$ and $\|A\|^{1/l}\le \|C\|^{1/l}$, which implies the first inequality in (\ref{lepa}). The second and third inequality in (\ref{lepa}) follow from  (\ref{gsh_ref}).

If, in addition, $L$ and $L^*$ have order continuous norms and $r\in \{ \rho _{ess}, \hat{\rho} _{ess}\}$, then Inequalities (\ref{lepa}) are proved similarly. Under the assumptions of (ii) Inequalities (\ref{lepa}) are proved in a similar way by applying Theorems \ref{thbegin}(ii) and \ref{for_matrices}.
\end{proof}

Next we extend  Theorem \ref{refin} by refining (\ref{Hu_ess}). 
\begin{theorem}
\label{kathyth1}
 Let $\Psi _1, \ldots , \Psi _m$ be bounded sets of positive kernel operators on a Banach function space $L$ and let
 $ \Phi _j=\Psi_j\ldots\Psi_m \Psi_1\ldots\Psi_{j-1}$ for $j=1,\ldots , m$. 
 If $r\in \{ \rho, \hat{\rho}\}$, then 
\begin{eqnarray}
\nonumber
& & r \left(\Psi _1 ^{\left( \frac{1}{m} \right)} \circ\Psi _2^{\left( \frac{1}{m} \right)}\circ\cdots\circ \Psi _m  ^{\left( \frac{1}{m} \right)} \right)
 \le r \left(\Phi _1^{\left( \frac{1}{m} \right)} \circ\Phi_2^{\left( \frac{1}{m} \right)}\circ\cdots\circ \Phi _m ^{\left( \frac{1}{m} \right)} \right)^{\frac{1}{m}}\\
&\le & r\left(\left(\Phi_1^{n}\right)^{(\frac{1}{m})}\circ (\Phi_2^{n})^{(\frac{1}{m})} \circ \cdots\circ
\left(\Phi_m^{n}\right)^{(\frac{1}{m})}\right)^{\frac{1}{nm}}\le r(\Psi_1 \Psi _2\cdots\Psi_m)^{\frac{1}{m}}.
\label{ineq1}
\end{eqnarray}

If, in addition, $L$ and $L^*$ have order continuous norms, then Inequalities (\ref{ineq1}) are valid also for all $r\in \{\rho_{ess}, \hat{\rho} _{ess}\}$.
\label{REFfr}
\end{theorem}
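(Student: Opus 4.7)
The plan is to prove the three inequalities in (\ref{ineq1}) in order, with the first being the only genuinely new step; the other two reduce to Theorem \ref{powers} applied to the sets $\Phi_j$ together with the cyclic identity $r(\Psi\Sigma)=r(\Sigma\Psi)$ from (\ref{again}). Write $\Omega := \Psi_1^{(1/m)} \circ \cdots \circ \Psi_m^{(1/m)}$ and $\Theta := \Phi_1^{(1/m)} \circ \cdots \circ \Phi_m^{(1/m)}$.

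For the first inequality, by $r(\Omega)^m = r(\Omega^m)$ from (\ref{again}) it suffices to show $r(\Omega^m) \le r(\Theta)$. A generic element $E \in \Omega^m$ has the form $E = E_1 \cdots E_m$ with $E_i = C_{i,1}^{(1/m)} \circ \cdots \circ C_{i,m}^{(1/m)}$ and $C_{i,j} \in \Psi_j$. The key is a cyclic reindexing: for each row $i$, use the commutativity of $\circ$ to rewrite $E_i = D_{i,1}^{(1/m)} \circ \cdots \circ D_{i,m}^{(1/m)}$, where $D_{i,j} := C_{i,\tau(i,j)}$ and $\tau(i,j) := ((j+i-2)\bmod m)+1$. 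Each $\tau(i,\cdot)$ is a cyclic permutation of $\{1,\ldots,m\}$, so row $i$ of $(D_{i,j})$ is indeed a reordering of row $i$ of $(C_{i,j})$; meanwhile the $j$-th column satisfies $D_{i,j} \in \Psi_{\tau(i,j)}$, so the ordered column product $D_{1,j}D_{2,j}\cdots D_{m,j}$ lies in $\Psi_j\Psi_{j+1}\cdots \Psi_m\Psi_1\cdots \Psi_{j-1} = \Phi_j$. Applying Theorem \ref{thbegin}(i) with $k=m$ and $\alpha_1=\cdots=\alpha_m=1/m$ yields
\begin{equation*}
E \le (D_{1,1}\cdots D_{m,1})^{(1/m)} \circ \cdots \circ (D_{1,m}\cdots D_{m,m})^{(1/m)} =: F \in \Theta.
\end{equation*}
Iterating this bound blockwise, any $E \in \Omega^{mL}$ is dominated by some $F \in \Theta^L$. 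Since the operators in play are positive, $E \le F$ gives $\rho(E) \le \rho(F)$, $\|E\| \le \|F\|$, $\gamma(E)\le\gamma(F)$, and (via $E^k \le F^k$ together with the monotonicity of $\gamma$) also $\rho_{ess}(E)\le\rho_{ess}(F)$. Passing to suprema and limits in the definitions (\ref{genrho})--(\ref{jointess}) yields $r(\Omega^m) \le r(\Theta)$ for each $r\in\{\rho,\hat{\rho}\}$, and for $r\in\{\rho_{ess},\hat{\rho}_{ess}\}$ under the order continuity hypothesis; hence $r(\Omega) \le r(\Theta)^{1/m}$.

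The second inequality is exactly (\ref{gsh_ref}) of Theorem \ref{powers} applied to the sets $\Phi_1,\ldots,\Phi_m$ with $\alpha_j=1/m$, raised to the power $1/m$. For the third inequality, the other part of (\ref{gsh_ref}) gives
\begin{equation*}
r\bigl((\Phi_1^n)^{(1/m)} \circ \cdots \circ (\Phi_m^n)^{(1/m)}\bigr)^{1/n} \le r(\Phi_1)^{1/m}\cdots r(\Phi_m)^{1/m},
\end{equation*}
and iterating $r(\Psi\Sigma)=r(\Sigma\Psi)$ shows that every cyclic rearrangement $\Phi_j = \Psi_j\cdots\Psi_m\Psi_1\cdots\Psi_{j-1}$ satisfies $r(\Phi_j)=r(\Psi_1\Psi_2\cdots\Psi_m)$; therefore the right-hand side collapses to $r(\Psi_1\cdots\Psi_m)$, and taking the $1/m$-th power yields the third inequality. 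Under the extra order continuity assumption, all the tools invoked (Theorem \ref{powers} for $\rho_{ess},\hat{\rho}_{ess}$ and the order-monotonicity of $\gamma$) have essential counterparts, so the whole argument goes through verbatim for $r\in\{\rho_{ess},\hat{\rho}_{ess}\}$.

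The main obstacle, and the only genuinely creative step, is the combinatorial choice of $\tau(i,\cdot)$ in the first step: the permutations must simultaneously preserve each row as a valid element of $\Omega$ (forcing $\tau(i,\cdot)$ to be a permutation and invoking commutativity of $\circ$) and arrange the columns so that the ordered product $D_{1,j}\cdots D_{m,j}$ traverses $\Psi_j,\Psi_{j+1},\ldots,\Psi_{j-1}$ and thereby lands in $\Phi_j$. Once this cyclic assignment is fixed, Theorem \ref{thbegin}(i) and the order-monotonicity of $\rho$, $\|\cdot\|$, $\gamma$, $\rho_{ess}$ on positive operators deliver the rest of the argument mechanically.
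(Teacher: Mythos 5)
Your proof is correct and follows essentially the same route as the paper: the cyclic reindexing $\tau(i,\cdot)$ of the Hadamard factors so that the $j$-th column product lands in $\Phi_j$ is exactly the paper's observation that $\Omega^m=\Sigma_1\Sigma_2\cdots\Sigma_m$ with $\Sigma_i$ the cyclically reordered copies of $\Omega$, after which the paper cites the set-level inequality (\ref{lepa}) of Theorem \ref{finally}(i) where you unfold the same element-level domination argument via (\ref{norm2}). The treatment of the remaining inequalities via (\ref{gsh_ref}) and $r(\Phi_1)=\cdots=r(\Phi_m)=r(\Psi_1\cdots\Psi_m)$ from (\ref{again}) also matches the paper.
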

\begin{proof}
Let $r\in\{  \rho, \hat{\rho}\}$. Denote
\begin{eqnarray}
\nonumber
\Sigma _1 &= & \Psi _1 ^{\left( \frac{1}{m} \right)}  \circ\cdots\circ \Psi _m  ^{\left( \frac{1}{m} \right)} , \;\;\; \Sigma _2= \Psi _2 ^{\left( \frac{1}{m} \right)}  \circ\cdots\circ \Psi _m  ^{\left( \frac{1}{m}\right)} \circ \Psi _1 ^{\left( \frac{1}{m} \right)}, \ldots, \\
\nonumber
\Sigma _m &=&\Psi _m ^{\left( \frac{1}{m} \right)} \circ \Psi _1 ^{\left( \frac{1}{m} \right)} \circ\cdots\circ \Psi _{m-1}  ^{\left( \frac{1}{m} \right)}.
\end{eqnarray}
Then by  (\ref{again}), (\ref{lepa}) and commutativity of Hadamard product we have

\begin{eqnarray}
\nonumber
& & r \left(\Psi _1 ^{\left( \frac{1}{m} \right)} \circ\Psi _2^{\left( \frac{1}{m} \right)}\circ\cdots\circ \Psi _m  ^{\left( \frac{1}{m} \right)} \right)^m = 
 r \left(\left(\Psi _1 ^{\left( \frac{1}{m} \right)} \circ\Psi _2^{\left( \frac{1}{m} \right)}\circ\cdots\circ \Psi _m  ^{\left( \frac{1}{m} \right)} \right)^m\right)  \\
 \nonumber
 &=& r (\Sigma _1 \Sigma_2 \cdots \Sigma _m)\le r \left(\Phi _1^{\left( \frac{1}{m} \right)} \circ\Phi_2^{\left( \frac{1}{m} \right)}\circ\cdots\circ \Phi _m ^{\left( \frac{1}{m} \right)} \right),
\end{eqnarray}
which proves the first inequality in (\ref{ineq1}).
 The second and the third inequality in (\ref{ineq1}) follow from (\ref{gsh_ref}) (or from (\ref{lepa})), since $r(\Phi_1)=r(\Phi_2)=\cdots r(\Phi_m)=r(\Psi_1\Psi_2\cdots\Psi_m)$ by (\ref{again}). If, in addition, $L$ and $L^*$ have order continuous norms, then (\ref{ineq1}) for $r\in \{ \rho _{ess}, \hat{\rho} _{ess}\}$ is proved in a similar way.
 \end{proof}

The following result extends (\ref{duo}).

\begin{theorem}
\label{equalities_joint}
Let $\Psi _1, \ldots , \Psi _m$ be bounded sets of positive kernel operators on a Banach function space $L$ and let $\alpha_1, \ldots ,\alpha_m$ be nonnegative numbers such that $\sum_{j=1}^{m}\alpha_j=1$. If  $ \Phi _j=\Psi_j\ldots\Psi_m \Psi_1\ldots\Psi_{j-1}$ for $j=1,\ldots , m$, $\beta\in [0,1]$, then for all $r\in \{ \rho, \hat{\rho}\}$ we have
\begin{eqnarray}
\nonumber
 r\left(\Psi_1\Psi_2\cdots\Psi_m\right)&=&r\left(\left(\Psi_1^{(\beta)}\circ\Psi_1^{(1-\beta)}\right)
\cdots\left(\Psi_m^{(\beta)}\circ\Psi_m^{(1-\beta)}\right)\right)\\
&=&
r\left(\Phi_1^{(\beta)}\circ\Phi_1^{(1-\beta)}\right)^{\alpha_1}\cdots
r\left(\Phi_m^{(\beta)}\circ\Phi_m^{(1-\beta)}\right)^{\alpha_m}.
\label{prop2}
\end{eqnarray}
If, in addition, $L$ and $L^*$ have order continuous norms, then Equalities (\ref{prop2}) are valid for $r\in \{\rho_{ess}, \hat{\rho} _{ess}\}$.
\end{theorem}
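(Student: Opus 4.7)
The plan is to reduce both equalities to two earlier building blocks: the pointwise identity $A = A^{(\beta)} \circ A^{(1-\beta)}$, valid for any positive kernel operator $A$ because $a(x,y)^{\beta} \cdot a(x,y)^{1-\beta} = a(x,y)$ whenever $a \geq 0$, together with the chain of inequalities (\ref{lepa}) in Theorem \ref{finally}(i). This mirrors the sandwich strategy used in the proof of Theorem \ref{kathyprop}(i), and I would follow that template closely.

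For the first equality, I would start by noting that for any choice of $A_j \in \Psi_j$ the identity $A_1 \cdots A_m = (A_1^{(\beta)} \circ A_1^{(1-\beta)}) \cdots (A_m^{(\beta)} \circ A_m^{(1-\beta)})$ gives the set inclusion $\Psi_1 \cdots \Psi_m \subseteq (\Psi_1^{(\beta)} \circ \Psi_1^{(1-\beta)}) \cdots (\Psi_m^{(\beta)} \circ \Psi_m^{(1-\beta)})$, so monotonicity of $r$ under set inclusion yields $r(\Psi_1 \cdots \Psi_m) \leq r((\Psi_1^{(\beta)} \circ \Psi_1^{(1-\beta)}) \cdots (\Psi_m^{(\beta)} \circ \Psi_m^{(1-\beta)}))$. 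The reverse inequality then comes from Theorem \ref{finally}(i), applied with $k = m$, the inner index running over two values with weights $\beta$ and $1-\beta$, and $\Psi_{i1} = \Psi_{i2} = \Psi_i$; the rightmost factor in (\ref{lepa}) becomes $r(\Psi_1 \cdots \Psi_m)^{\beta} \cdot r(\Psi_1 \cdots \Psi_m)^{1-\beta} = r(\Psi_1 \cdots \Psi_m)$, which forces equality throughout.

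For the second equality, I would use two ingredients. First, the cyclic invariance $r(\Psi\Sigma) = r(\Sigma\Psi)$ from (\ref{again}) applied iteratively gives $r(\Phi_j) = r(\Psi_1 \Psi_2 \cdots \Psi_m)$ for every $j = 1, \ldots, m$. Second, equation (\ref{uno}) of Theorem \ref{kathyprop}(i), applied to each $\Phi_j$ with two weights $\beta$ and $1-\beta$, yields $r(\Phi_j^{(\beta)} \circ \Phi_j^{(1-\beta)}) = r(\Phi_j) = r(\Psi_1 \cdots \Psi_m)$. Raising to the power $\alpha_j$ and taking the product over $j$ then produces $r(\Psi_1 \cdots \Psi_m)^{\sum_j \alpha_j} = r(\Psi_1 \cdots \Psi_m)$, where the hypothesis $\sum_j \alpha_j = 1$ is used in exactly this last step.

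The essential-radius versions for $r \in \{\rho_{ess}, \hat{\rho}_{ess}\}$ follow by the identical argument, invoking the essential-radius clauses of Theorems \ref{finally}(i) and \ref{kathyprop}(i) under the order continuity hypothesis on $L$ and $L^{*}$. I do not expect a genuine obstacle here: the heavy lifting has already been done in Theorems \ref{finally} and \ref{kathyprop}, and the only points that demand care are matching the indexing conventions when instantiating Theorem \ref{finally}(i) (a double index $(i, \text{inner})$ with $k = m$ and inner range $2$) and ensuring that the constraint $\sum_j \alpha_j = 1$ is deployed precisely at the final exponent collapse.
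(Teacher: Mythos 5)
Your proposal is correct and follows essentially the same route as the paper: both directions rest on the set inclusion $\Psi_1\cdots\Psi_m\subseteq\left(\Psi_1^{(\beta)}\circ\Psi_1^{(1-\beta)}\right)\cdots\left(\Psi_m^{(\beta)}\circ\Psi_m^{(1-\beta)}\right)$ (coming from $A=A^{(\beta)}\circ A^{(1-\beta)}$) together with the first inequality of (\ref{lepa}), the bound (\ref{gsh_ref}), and the cyclic invariance (\ref{again}). The only, harmless, difference is organizational: the paper threads the weighted product $r\left(\Phi_1^{(\beta)}\circ\Phi_1^{(1-\beta)}\right)^{\alpha_1}\cdots r\left(\Phi_m^{(\beta)}\circ\Phi_m^{(1-\beta)}\right)^{\alpha_m}$ into a single cyclic chain of inequalities that collapses, whereas you obtain the second equality factor by factor from (\ref{uno}), which is an equivalent deployment of the same ingredients.
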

\begin{proof}
Let $r\in \{ \rho, \hat{\rho}\}$. To prove Equalities (\ref{prop2}) we use the first inequality in (\ref{lepa}) and (\ref{again}) 
to obtain that 
\be
r\left(\left(\Psi_1^{(\beta)}\circ\Psi_1^{(1-\beta)}\right)
\cdots\left(\Psi_m^{(\beta)}\circ\Psi_m^{(1-\beta)}\right)\right)\le r\left(\Phi_i^{(\beta)}\circ\Phi_i^{(1-\beta)}\right)
\label{protesti}
\ee
 for all $i=1,\ldots , m$. Indeed, 
by (\ref{again}) and the first inequality in (\ref{lepa}) we have
$$r((\Psi_1^{(\beta)}\circ\Psi_1^{(1-\beta)})
\cdots(\Psi_m^{(\beta)}\circ\Psi_m^{(1-\beta)})) $$
$$= r((\Psi_i^{(\beta)}\circ\Psi_i^{(1-\beta)})
\cdots(\Psi_m^{(\beta)}\circ\Psi_m^{(1-\beta)})(\Psi_1^{(\beta)}\circ\Psi_1^{(1-\beta)})\cdots(\Psi_{i-1}^{(\beta)}\circ\Psi_{i-1}^{(1-\beta)}))$$
$$\le  r(\Phi_i^{(\beta)}\circ\Phi_i^{(1-\beta)}),$$
which proves (\ref{protesti}).
Since $\sum_{j=1}^{m}\alpha_j=1$, Inequality (\ref{protesti}) implies 
\be
\nonumber
r((\Psi_1^{(\beta)}\circ\Psi_1^{(1-\beta)})
\cdots(\Psi_m^{(\beta)}\circ\Psi_m^{(1-\beta)}))\le r(\Phi_1^{(\beta)}\circ\Phi_1^{(1-\beta)})^{\alpha_1}\cdots r(\Phi_m^{(\beta)}\circ\Phi_m^{(1-\beta)})^{\alpha_m}
\ee
\be
\le r(\Psi_1\cdots\Psi_m). 
\label{keineimpfpflicht}
\ee
The second inequality in (\ref{keineimpfpflicht}) follows from (\ref{gsh_ref}) and the fact that $r(\Phi_1)=\cdots=r(\Phi_m)=r(\Psi_1\cdots\Psi_m)$. Since $\Psi_i\subset \Psi_i^{(\beta)}\circ\Psi_i^{(1-\beta)}$ for all $i=1,\ldots , m$ and $\beta\in [0,1]$, we obtain 
$$ r(\Psi_1\cdots\Psi_m) \le r((\Psi_1^{(\beta)}\circ\Psi_1^{(1-\beta)})\cdots(\Psi_m^{(\beta)}\circ\Psi_m^{(1-\beta)})),$$
which together with  (\ref{keineimpfpflicht}) proves Equalities (\ref{prop2}).
If, in addition, $L$ and $L^*$ have order continuous norms, then Equalities (\ref{prop2}) are proved in a similar way for $r\in \{\rho_{ess}, \hat{\rho} _{ess}\}$.
\end{proof}


The following result, that extends the main results from \cite{P12}, is proved in  a similar way as Theorem \ref{REFfr} by applying Theorems \ref{for_matrices} and \ref{finally}(ii)  instead of Theorems \ref{powers} and \ref{finally}(i) in the proofs above.
\begin{theorem}
\label{kathyth2}
Given $L\in\mathcal{L}$, let $\Psi _1, \ldots , \Psi _m$ be bounded sets of nonnegative matrices that define operators on L and
 $ \Phi _j=\Psi_j\ldots\Psi_m \Psi_1\ldots\Psi_{j-1}$ for $j=1,\ldots , m$. Assume that
 $\alpha\ge\frac{1}{m}$, $\alpha _j \ge 0$, $j=1, \ldots , m$, $\sum_{j=1}^{m}\alpha_j \ge 1$ and $n \in \NN$. If $r\in \{ \rho, \hat{\rho}\}$ and $ \Sigma _j=\Psi_j ^{(\alpha m)}\ldots\Psi_m ^{(\alpha m)} \Psi_1 ^{(\alpha m)} \ldots\Psi_{j-1} ^{(\alpha m)}$ for $j=1,\ldots , m$, then we have
\begin{eqnarray}
\nonumber
& & r\left(\Psi _1^{(\alpha)}\circ\cdots\circ\Psi _m^{(\alpha)}\right)\le r\left(\Phi _1^{(\alpha)}\circ\cdots\circ\Phi _m^{(\alpha)}\right)^{\frac{1}{m}} \\
&\le& r\left((\Phi _1 ^n)^{(\alpha)}\circ\cdots\circ(\Phi _m ^n)^{(\alpha)}\right)^{\frac{1}{mn}} \le r\left(\Psi_1\cdots\Psi_m\right)^{\alpha},
\label{ineqx}
\end{eqnarray}
\begin{eqnarray}
\nonumber
& & r\left(\Psi _1^{(\alpha)}\circ\cdots\circ\Psi _m^{(\alpha)}\right)\le r\left(\Psi_1^{(\alpha m)}\cdots\Psi_m^{(\alpha m)}\right)^{\frac{1}{m}}\\
&\le&  r\left((\Psi_1\cdots\Psi_m)^{(\alpha m)}\right)^{\frac{1}{m}}\le  r\left(((\Psi_1\cdots\Psi_m)^n)^{(\alpha m)}\right)^{\frac{1}{nm}} \le
r\left(\Psi_1\cdots\Psi_m\right)^{\alpha}. \;\;\;\;\;\;
\label{ineqx2}
\end{eqnarray}
If, in addition, $\alpha \ge 1$ then 
\begin{eqnarray}
\nonumber
& & r\left(\Psi_1^{(\alpha)}\circ\cdots\circ\Psi_m^{(\alpha)}\right)  \le  r\left(\Phi_1^{(\alpha)}\circ\cdots\circ
\Phi_m^{(\alpha)}\right)^{\frac{1}{m}} \le  r\left((\Phi _1 ^n)^{(\alpha)}\circ\cdots\circ(\Phi _m ^n)^{(\alpha)}\right)^{\frac{1}{mn}} \\
&\le &\left( r\left((\Phi_1 ^n)^{(m)}\right)\cdots r\left((\Phi_m ^n)^{(m)}\right)\right)^{\frac{\alpha}{m^2 n}}
 \le r\left(\Psi_1\cdots\Psi_m\right)^{\alpha},
\label{xyz}
\end{eqnarray}
\begin{eqnarray}
\nonumber
& & r\left(\Psi _1^{(\alpha)}\circ\cdots\circ\Psi _m^{(\alpha)}\right)\le  r\left(\Sigma _1^{(\frac{1}{m})}\circ\cdots\circ\Sigma _m^{(\frac{1}{m})}\right)^{\frac{1}{m}} \\
\nonumber
&\le &  r\left((\Sigma _1 ^n)^{(\frac{1}{m})}\circ\cdots\circ(\Sigma _m ^n)^{(\frac{1}{m})}\right)^{\frac{1}{mn}}
 \le r\left(\Psi_1^{(\alpha m)}\cdots\Psi_m^{(\alpha m)}\right)^{\frac{1}{m}} \\
&\le &  r\left((\Psi_1\cdots\Psi_m)^{(\alpha m)}\right)^{\frac{1}{m}}\le  r\left(((\Psi_1\cdots\Psi_m)^n)^{(\alpha m)}\right)^{\frac{1}{nm}} \le r\left(\Psi_1\cdots\Psi_m\right)^{\alpha}. \;\;\;\;\;\;
\label{kraj}
\end{eqnarray}
\end{theorem}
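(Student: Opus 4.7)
The plan is to mimic, step by step, the argument already used for Theorem \ref{kathyth1} (= Theorem \ref{REFfr}), systematically replacing invocations of Theorem \ref{powers} and Theorem \ref{finally}(i) (which need $\sum\alpha_j=1$ and live on a Banach function space) by the matrix-level Theorems \ref{for_matrices} and \ref{finally}(ii) (which only require $\sum\alpha_j\ge 1$). The recurring ingredients will be the cyclic invariance $r(\Psi\Sigma)=r(\Sigma\Psi)$ and the power identity $r(\Sigma^m)=r(\Sigma)^m$ from (\ref{again}), the commutativity of the Hadamard product, the componentwise bound $A_1^{(t)}\cdots A_m^{(t)}\le(A_1\cdots A_m)^{(t)}$ from Theorem \ref{special_case}(iii), and the refined Hadamard-power bound (\ref{folge}).

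For chain (\ref{ineqx}), I raise the left-hand side to the $m$-th power, use commutativity of the Hadamard product to write
\[
\bigl(\Psi_1^{(\alpha)}\circ\cdots\circ\Psi_m^{(\alpha)}\bigr)^m=\Xi_1\Xi_2\cdots\Xi_m,
\]
where $\Xi_j$ is the $j$-th cyclic rearrangement of $\Psi_1^{(\alpha)}\circ\cdots\circ\Psi_m^{(\alpha)}$, and apply the first inequality of Theorem \ref{finally}(ii) (with $k=m$ and constant weights $\alpha$, whose sum $m\alpha\ge 1$) together with the identification $\Xi_1\Xi_2\cdots\Xi_m\subset$ the $m$-product whose $j$-th column product is $\Phi_j$; this yields the bound by $r(\Phi_1^{(\alpha)}\circ\cdots\circ\Phi_m^{(\alpha)})$. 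The remaining two inequalities follow from (\ref{gsh_ref}) applied to $\Phi_1,\ldots,\Phi_m$ and from the equality $r(\Phi_j)=r(\Psi_1\cdots\Psi_m)$ coming from (\ref{again}).

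Chain (\ref{ineqx2}) is obtained by rewriting $\Psi_j^{(\alpha)}=(\Psi_j^{(\alpha m)})^{(1/m)}$; since $\alpha m\ge 1$, Theorem \ref{special_case}(iii) guarantees that $\Psi_j^{(\alpha m)}$ consists of matrices defining operators on $L$, so I may regard them as bounded sets of positive kernel operators on the Banach function space $L$ and invoke (\ref{Hu_ess}) of Theorem \ref{powers}. The second inequality comes from Theorem \ref{special_case}(iii), namely $A_1^{(\alpha m)}\cdots A_m^{(\alpha m)}\le(A_1\cdots A_m)^{(\alpha m)}$, lifted to sets; the last two are (\ref{folge}). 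For chain (\ref{xyz}) only the fourth step needs new bookkeeping: I write $(\Phi_j^n)^{(\alpha)}=((\Phi_j^n)^{(m)})^{(\alpha/m)}$ and apply Theorem \ref{finally}(ii) with equal weights $\alpha/m$ (whose sum $\alpha\ge 1$) to the sets $(\Phi_j^n)^{(m)}$, giving
\[
r\bigl((\Phi_1^n)^{(\alpha)}\circ\cdots\circ(\Phi_m^n)^{(\alpha)}\bigr)\le r((\Phi_1^n)^{(m)})^{\alpha/m}\cdots r((\Phi_m^n)^{(m)})^{\alpha/m};
\]
the final inequality in (\ref{xyz}) then reduces, via (\ref{folge}) with $t=m$ and $r(\Phi_j)=r(\Psi_1\cdots\Psi_m)$, to $r((\Phi_j^n)^{(m)})\le r(\Psi_1\cdots\Psi_m)^{mn}$.

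Finally, for chain (\ref{kraj}) I apply the already-proven Theorem \ref{kathyth1} to the sets $\Psi_j^{(\alpha m)}$ (legitimate because $\alpha m\ge 1$); this yields the first three inequalities with $\Phi_j$ there replaced by $\Sigma_j$. The remaining inequalities coincide with the tail of chain (\ref{ineqx2}) and are dispatched identically. The essential-spectral-radius versions under order-continuity of $L$ and $L^*$ are handled in the same way by invoking the $\rho_{ess}$, $\hat\rho_{ess}$ statements of Theorems \ref{for_matrices}, \ref{finally}, \ref{kathyth1} in place of the $\rho$, $\hat\rho$ statements. The main obstacle is not any single hard estimate but the bookkeeping: one must carefully track which hypothesis ($\sum\alpha_j=1$ or $\sum\alpha_j\ge 1$, and whether a set consists of matrices or merely of positive kernel operators) is in force at each step, so as to choose between Theorem \ref{powers}/\ref{finally}(i) and Theorem \ref{for_matrices}/\ref{finally}(ii), and to verify that the intermediate sets such as $\Psi_j^{(\alpha m)}$ still satisfy the assumptions needed for the next invocation.
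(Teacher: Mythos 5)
Your proposal is correct and follows essentially the same route as the paper: the authors likewise prove (\ref{ineqx}) by repeating the cyclic-decomposition argument of Theorem \ref{kathyth1} with Theorems \ref{for_matrices} and \ref{finally}(ii) in place of Theorems \ref{powers} and \ref{finally}(i), obtain (\ref{ineqx2}) from the rewriting $\Psi_j^{(\alpha)}=(\Psi_j^{(\alpha m)})^{(1/m)}$ and (\ref{Hu_ess}), get the extra step in (\ref{xyz}) from $(\Phi_j^n)^{(\alpha)}=((\Phi_j^n)^{(m)})^{(\alpha/m)}$, and deduce (\ref{kraj}) by applying Theorem \ref{kathyth1} to the sets $\Psi_j^{(\alpha m)}$. (Your closing remark about essential spectral radii is superfluous, since the theorem is stated only for $r\in\{\rho,\hat\rho\}$, but this does not affect the proof.)
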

\begin{proof}
Inequalities (\ref{ineqx}) are proved in a similar way as Theorem \ref{kathyth1} by applying  Theorems \ref{for_matrices} and \ref{finally}(ii)  instead of Theorems \ref{powers} and \ref{finally}(i). For the proof of (\ref{ineqx2}) observe that 
$$\Psi_1^{(\alpha)}\circ\cdots\circ\Psi_m^{(\alpha)}=(\Psi_1^{(\alpha m)})^{(\frac{1}{m})}\circ\cdots\circ(\Psi_m^{(\alpha m)})^{(\frac{1}{m})}$$
 for $i=1,\ldots , m$. Now the first inequality in (\ref{ineqx2}) follows from  (\ref{Hu_ess})
 (or from (\ref{xyz})): 
\be
\nonumber
r\left(\Psi_1^{(\alpha)}\circ\cdots\circ\Psi_m^{(\alpha)}\right)=r\left((\Psi_1^{(\alpha m)})^{(\frac{1}{m})}\circ\cdots\circ(\Psi_m^{(\alpha m)})^{(\frac{1}{m})}\right)\le r\left(\Psi_1^{(\alpha m)}\cdots\Psi_m^{(\alpha m)}\right)^{\frac{1}{m}}.
\ee
Other inequalities in  (\ref{ineqx2}) follow from Theorem \ref{finally}(ii).

Assume $\alpha \ge 1$. The first and second inequality in (\ref{xyz}) follow from (\ref{ineqx}).  To prove the third inequality in (\ref{xyz}) notice that $(\Phi_i ^n)^{(\alpha)}=((\Phi_i ^n)^{(m)})^{(\frac{\alpha}{m})}$, $\frac{\alpha}{m} \ge \frac{1}{m}$ and apply 
Theorem \ref{for_matrices}. The last inequality in (\ref{xyz}) follows again from Theorem \ref{for_matrices} and the fact that $r(\Phi_1)= \cdots = r(\Phi_m)=r(\Psi_1\cdots\Psi_m)$.

To prove the first three inequalities in (\ref{kraj}) observe that  $\Psi_i^{(\alpha)}=(\Psi_i^{(m \alpha)})^{(\frac{1}{m})}$, $\frac{\alpha}{m} \ge \frac{1}{m}$ and apply Theorem \ref{kathyth1}. The remaining three inequalities in  (\ref{kraj}) follow from (\ref{ineqx2}), which completes the proof.
\end{proof}

We will need the following well-known inequalities (see e.g. \cite{Mi}). For
nonnegative measurable functions and for nonnegative numbers $\alpha$ and $\beta$  such that $\alpha+\beta \ge 1$ we have
\be
f_1^{\alpha}g_1^{\beta}+\cdots+f_m^{\alpha}g_m^{\beta} \le (f_1+\cdots+f_m)^{\alpha}(g_1+\cdots+g_m)^{\beta}
\label{mitrn} 
\ee
More generally, for nonnegative measurable functions $\{f _{i j}\}_{i=1, j=1}^{k, m}$ and for nonnegative numbers  $\alpha_j$, $j=1, \ldots , m$,   such that $\sum _{j=1} ^m \alpha _j \ge 1$ we have
\be
(f_{11}^{\alpha _1} \cdots f_{1m}^{\alpha _m})+\cdots+(f_{k1}^{\alpha _1} \cdots f_{km}^{\alpha _m}) \le (f_{11}+\cdots+f_{k1})^{\alpha _1} \cdots (f_{1m}+\cdots+f_{km})^{\alpha _m}
\label{mitr2}
\ee
The sum of bounded sets $\Psi$ and $\Sigma$ is a bounded set defined by
$\Psi + \Sigma =\{A+B: A \in \Psi, B \in \Sigma\}$.

\begin{theorem}
\label{finally2}
Let $\{\Psi _{i j}\}_{i=1, j=1}^{k, m}$ be bounded sets of positive kernel operators on a Banach function space $L$ and let
 $\alpha _1, \ldots , \alpha _m$ be positive numbers.   \\

\noindent (i) If  $r \in \{\rho, \hat{\rho}\}$, $\sum _{j=1} ^m \alpha _j = 1$ and $n \in \NN$, then
\begin{eqnarray}
\nonumber
& & r \left(\left(\Psi_{1 1}^{(\alpha _1)} \circ \cdots \circ \Psi_{1 m}^{(\alpha _m)}\right) + \ldots + \left(\Psi_{k 1}^{(\alpha _1)} \circ \cdots \circ \Psi_{k m}^{(\alpha _m)} \right) \right) \\
\nonumber
& \le &r \left((\Psi_{1 1}+ \cdots  + \Psi_{k 1})^{(\alpha _1)} \circ \cdots
\circ (\Psi_{1 m} + \cdots  + \Psi_{k m})^{(\alpha _m)}\right)\\
\nonumber
& \le &r \left(((\Psi_{1 1} + \cdots  + \Psi_{k 1})^n)^{(\alpha _1)} \circ \cdots
\circ ((\Psi_{1 m} + \cdots + \Psi_{k m})^n)^{(\alpha _m)}\right)^{\frac{1}{n}}\\
&\le&r \left( \Psi_{1 1} + \cdots  + \Psi_{k 1} \right)^{\alpha _1} \cdots
r \left( \Psi_{1 m} + \cdots  + \Psi_{k m}\right)^{\alpha _m} .
\label{lepa2}
\end{eqnarray}
If, in addition, $L$ and $L^*$ have order continuous norms, then Inequalities (\ref{lepa2}) hold also for each $r\in \{ \rho _{ess}, \hat{\rho} _{ess}\}$.

\noindent(ii) If $L\in\mathcal L$, $r \in \{\rho, \hat{\rho}\}$, $\sum_{j=1}^{m} \alpha _j\ge 1$ and $\{\Psi_{i j}\}_{i=1, j=1}^{k, m}$ are bounded sets of nonnegative matrices that define positive operators on $L$, then Inequalities (\ref{lepa2}) hold. 
\end{theorem}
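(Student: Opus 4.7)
The strategy is to emulate the proof of Theorem \ref{finally}(i) with the submultiplicativity-type bound (\ref{norm2}) replaced by an additive analogue derived from the Mitrinovi\'c-type inequality (\ref{mitr2}). Concretely, evaluating (\ref{mitr2}) pointwise on $X \times X$ with $f_{ij}$ replaced by the kernels of positive kernel operators $A_{ij}$ yields the operator domination
\begin{equation*}
\sum_{i=1}^{k} A_{i1}^{(\alpha_1)} \circ \cdots \circ A_{im}^{(\alpha_m)} \;\le\; \Bigl(\sum_{i=1}^{k} A_{i1}\Bigr)^{(\alpha_1)} \circ \cdots \circ \Bigl(\sum_{i=1}^{k} A_{im}\Bigr)^{(\alpha_m)},
\end{equation*}
which is valid as long as $\sum_{j=1}^{m}\alpha_j \ge 1$ and, in particular, whenever $\sum_{j}\alpha_j = 1$. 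This is the only new pointwise ingredient; everything else is bookkeeping tied to the structure of the generalized and joint (essential) spectral radii.

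For the first inequality in (\ref{lepa2}), I would fix $l \in \NN$ and take an arbitrary $A$ in the $l$-th power of the left-hand side Minkowski-sum set, so that $A = A^{(1)} \cdots A^{(l)}$ where each factor has the form $A^{(s)} = \sum_{i=1}^{k} A_{s,i,1}^{(\alpha_1)} \circ \cdots \circ A_{s,i,m}^{(\alpha_m)}$ with $A_{s,i,j} \in \Psi_{ij}$. The pointwise bound above gives $A^{(s)} \le C^{(s)}$, where $C^{(s)} := (\sum_{i} A_{s,i,1})^{(\alpha_1)} \circ \cdots \circ (\sum_{i} A_{s,i,m})^{(\alpha_m)}$. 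Hence $A \le C := C^{(1)} \cdots C^{(l)}$ belongs to the $l$-th power of the right-hand side set. Monotonicity of $\rho$ and $\|\cdot\|$ (and, under the order-continuity assumptions, of $\gamma$) on positive kernel operators yields $\rho(A)^{1/l} \le \rho(C)^{1/l}$, $\|A\|^{1/l} \le \|C\|^{1/l}$, and $\gamma(A)^{1/l} \le \gamma(C)^{1/l}$. Taking suprema over $l$-th powers and the appropriate $\limsup$ or $\lim$ as in the definitions (\ref{genrho})--(\ref{jointess}) then delivers the first inequality in (\ref{lepa2}) uniformly for $r \in \{\rho, \hat{\rho}, \rho_{ess}, \hat{\rho}_{ess}\}$.

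The second and third inequalities in (\ref{lepa2}) follow at once from Theorem \ref{powers} applied to the bounded sets $\widetilde{\Psi}_j := \Psi_{1j}+\cdots+\Psi_{kj}$, $j=1,\ldots,m$, via (\ref{gsh_ref}) and its essential-radius counterpart. Part (ii) is handled identically, substituting Theorem \ref{for_matrices} for Theorem \ref{powers} at the final step and invoking Theorem \ref{special_case}(ii) to ensure that each weighted Hadamard geometric mean under consideration is again a nonnegative matrix defining an operator on $L$; crucially, (\ref{mitr2}) is already tailored to the hypothesis $\sum_{j}\alpha_j \ge 1$, so no extra work is required on this side. The only real obstacle I anticipate is the bookkeeping needed to see that the Minkowski-sum structure interacts cleanly with the $l$-fold products in $(\cdot)^{l}$; this is absorbed by the observation that $0 \le A \le C$ forces $\rho(A) \le \rho(C)$, $\|A\| \le \|C\|$ and $\gamma(A) \le \gamma(C)$, together with (\ref{esslim=inf}) for the essential versions.
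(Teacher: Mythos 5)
Your proposal is correct and follows essentially the same route as the paper's proof: the pointwise application of (\ref{mitr2}) to the kernels to dominate each factor of an element of the $l$-th power of the left-hand set by an element of the $l$-th power of the right-hand set, followed by monotonicity of $\rho$, $\|\cdot\|$ and $\gamma$, with the remaining inequalities deduced from (\ref{gsh_ref}) and the matrix case handled via Theorems \ref{thbegin}(ii) and \ref{for_matrices}. No substantive difference from the paper's argument.
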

\begin{proof} (i) Let  $r \in \{\rho, \hat{\rho}\}$, $\sum _{i=1} ^m \alpha _i = 1$ and $n \in \NN$. To prove the first inequality in (\ref{lepa2}) let $l\in \NN$ and 
$$A \in \left(\left(\Psi_{1 1}^{(\alpha _1)} \circ \cdots \circ \Psi_{1 m}^{(\alpha _m)}\right) +\ldots +\left(\Psi_{k 1}^{(\alpha _1)} \circ \cdots \circ \Psi_{k m}^{(\alpha _m)} \right)\right)^l.$$
Then $A=A_1 \cdots A_l$, where for each $i=1, \ldots , l$ we have
$$A_i = \left(A_{i1 1}^{(\alpha _1)} \circ \cdots \circ A_{i 1 m}^{(\alpha _m)}\right) +\ldots + \left(A_{i k 1}^{(\alpha _1)} \circ \cdots \circ A_{i k m}^{(\alpha _m)} \right),$$
where $A_{i1 1} \in \Psi_{1 1}, \ldots , A_{i1 m} \in \Psi_{1 m}, \ldots , A_{ik 1} \in \Psi_{k 1}, \ldots , A_{ikm} \in \Psi_{km}. $ 
Then by (\ref{mitr2}) for each $i=1, \ldots , l$ we have
$$A_i \le C_i := (A_{i11}+A_{i21}+ \cdots + A_{ik1} )^{(\alpha_1)} \circ \cdots \circ (A_{i1m}+A_{i2m}+ \cdots +A_{ikm} )^{(\alpha_m)}, $$
where $C_i \in (\Psi_{1 1} +\cdots  +\Psi_{k 1})^{(\alpha _1)} \circ \cdots
\circ (\Psi_{1 m} +\cdots +\Psi_{k m})^{(\alpha _m)}.$ Therefore 
$$A \le C:=C_1\cdots C_l  \in \left((\Psi_{1 1} +\cdots  +\Psi_{k 1})^{(\alpha _1)} \circ \cdots
\circ (\Psi_{1 m} + \cdots + \Psi_{k m})^{(\alpha _m)}\right)^l,$$
$r(A)^{1/l}\le r(C)^{1/l}$ and $\|A\|^{1/l}\le \|C\|^{1/l}$, which implies the first inequality in (\ref{lepa2}). The second and third inequality in (\ref{lepa2}) follow from  (\ref{gsh_ref}).

If, in addition, $L$ and $L^*$ have order continuous norms and $r\in \{ \rho _{ess}, \hat{\rho} _{ess}\}$, then Inequalities (\ref{lepa2}) are proved similarly. Under the assumptions of (ii) Inequalities (\ref{lepa2}) are proved in a similar way by applying Theorems \ref{thbegin}(ii) and \ref{for_matrices}.
\end{proof}

\section{Weighted geometric symmetrizations}

Let $\Psi$ and $\Sigma$ be bounded sets of positive kernel operators on $L^2(X,\mu)$ and $\alpha \in [0,1]$. Denote by  $\Psi ^*$ and $S_{\alpha}(\Psi)$  bounded sets of positive kernel operators on $L^2(X,\mu)$ defined by 
$\Psi ^* =\{A^* : A \in \Psi\}$ and
$$S_{\alpha}(\Psi) = \Psi ^{(\alpha)} \circ (\Psi ^*)^{(1-\alpha)} =\{ A ^{(\alpha)} \circ (B^*)^{(1-\alpha)}:A,B \in \Psi\}.$$
We denote simply $S(\Psi)=S_{\frac{1}{2}}(\Psi)$, {\it the geometric symmetrization} of $\Psi$.
Observe that $(\Psi \Sigma)^*= \Sigma ^* \Psi ^*$ and $(\Psi ^m)^* = (\Psi ^*)^m$ for all $m \in \NN$.
By (\ref{gsh_ref}) it follows that
\be
r(S_{\alpha}(\Psi)) \le  r (S_{\alpha}( \Psi ^{m}))^{\frac{1}{m}} \le r(\Psi)
\label{good_corollary_2}
\ee
for all $m \in \mathbb{N}$ and $r\in \{ \rho , \hat{\rho} , \rho _{ess}, \hat{\rho} _{ess}\}$, since $r(\Psi) = r(\Psi ^*)$.
In particular, for all $r\in \{ \rho , \hat{\rho} , \rho _{ess}, \hat{\rho} _{ess}\}$ and 
$n \in \mathbb{N}\cup \{0\}$ we have 
\be
r(S_{\alpha}(\Psi)) \le  r (S_{\alpha}( \Psi ^{2^n}))^{2^{-n}} \le r(\Psi).
\label{good_corollary}
\ee
 Consequently,
\be
r (S_{\alpha}(\Psi ))^2 \le r (S_{\alpha}(\Psi ^2)) \le r(\Psi)^2
\label{good_corollary2}
\ee
holds for all $r\in \{ \rho , \hat{\rho} , \rho _{ess}, \hat{\rho} _{ess}\}$.

The following result that follows from (\ref{good_corollary}) extends 
\cite[Theorem 2.2]{Drn}, \cite[Theorem 3.5]{P21} and \cite[Theorem 3.5(i)]{BP21}. 
\begin{theorem}
\label{...+drnovsek}
 Let $\Psi$ be a bounded set of positive kernel operators on $L^2(X,\mu)$, $\alpha \in [0,1]$ and let 
$r_n = r (S_{\alpha}( \Psi ^{2^n}))^{2^{-n}} $ for $n \in \mathbb{N}\cup \{0\}$ and 
$r\in \{ \rho , \hat{\rho} , \rho _{ess}, \hat{\rho} _{ess}\}$. 
Then for each $n$
$$ r (S_{\alpha}(\Psi))= r_0 \le r_1 \le \cdots \le r_n \le r (\Psi).$$	
\end{theorem}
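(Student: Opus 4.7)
The plan is to derive the full chain by iterating the two-step inequality (\ref{good_corollary2}) and pairing it with the global bound (\ref{good_corollary}). Both ingredients apply to an arbitrary bounded set of positive kernel operators on $L^{2}(X,\mu)$, and the family $\{\Psi^{2^{n}}\}_{n\ge 0}$ consists of such sets, so no extra hypothesis check is needed.

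First I would dispose of the two easy endpoints. The equality $r_{0}=r(S_{\alpha}(\Psi))$ is immediate from the definition $r_{n}=r(S_{\alpha}(\Psi^{2^{n}}))^{2^{-n}}$ with $n=0$. The global upper bound $r_{n}\le r(\Psi)$ for every $n$ is exactly the right inequality in (\ref{good_corollary}) applied to $\Psi$.

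The main content is the monotonicity $r_{n}\le r_{n+1}$. The idea is to apply (\ref{good_corollary2}) not to $\Psi$ itself but to the bounded set $\Psi^{2^{n}}$ of positive kernel operators on $L^{2}(X,\mu)$. Since $(\Psi^{2^{n}})^{2}=\Psi^{2^{n+1}}$, the left-hand inequality in (\ref{good_corollary2}) becomes
\begin{equation}
r\bigl(S_{\alpha}(\Psi^{2^{n}})\bigr)^{2}\;\le\;r\bigl(S_{\alpha}(\Psi^{2^{n+1}})\bigr).
\notag
\end{equation}
Raising both sides to the power $2^{-(n+1)}$ and rewriting yields
\begin{equation}
r_{n}\;=\;r\bigl(S_{\alpha}(\Psi^{2^{n}})\bigr)^{2^{-n}}\;\le\;r\bigl(S_{\alpha}(\Psi^{2^{n+1}})\bigr)^{2^{-(n+1)}}\;=\;r_{n+1},
\notag
\end{equation}
which is the desired step. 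Chaining these inequalities gives $r_{0}\le r_{1}\le\cdots\le r_{n}$, and combining with the global bound completes the chain.

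There is no real obstacle here; the only subtle point to verify is that (\ref{good_corollary2}) is genuinely stated for an arbitrary bounded set of positive kernel operators and is thus legitimately applicable to the set $\Psi^{2^{n}}$ (and for all four choices of $r$, which requires the order continuity hypotheses on $L$ to be already absorbed into (\ref{good_corollary}) and (\ref{good_corollary2}) in the essential cases, exactly as stated there). Once that point is acknowledged, the proof is purely a rescaling argument from the already-established two-step doubling inequality.
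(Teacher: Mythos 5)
Your proof is correct and follows essentially the same route as the paper: the upper bound $r_n\le r(\Psi)$ comes from (\ref{good_corollary}) and the monotonicity step comes from applying the first inequality in (\ref{good_corollary2}) to $\Psi^{2^n}$ and taking roots. The only difference is that you spell out the rescaling explicitly, which the paper leaves implicit.
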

\begin{proof}
By (\ref{good_corollary}) we have 
$r_n \le r(\Psi)$. Since $r_{n-1}\le r_n$ for all $n \in \mathbb{N}$ by the first inequality in  (\ref{good_corollary2}), the proof is completed.
\end{proof}

The following result extends  \cite[Proposition 3.2]{BP21}.
\begin{proposition}
Let $\Psi_1, \ldots ,\Psi_m$ be bounded sets of positive kernel operators on $L^2(X,\mu)$, $\alpha \in [0,1]$, $n \in \NN$ and $r\in \{ \rho , \hat{\rho} , \rho _{ess}, \hat{\rho} _{ess}\}$. Then we have
\begin{eqnarray}
\nonumber
& & r(S_{\alpha} (\Psi_{1}) \cdots  S_{\alpha}(\Psi_{m})) 
\le r\left((\Psi_1 \cdots \Psi_m )^{(\alpha)} \circ ((\Psi_m \cdots \Psi_1)^*)^{(1-\alpha)}\right) \\
\nonumber
&\le & r\left(((\Psi_1 \cdots \Psi_m )^n)^{(\alpha)} \circ (((\Psi_m \cdots \Psi_1)^*)^n)^{(1-\alpha)}\right)^{\frac{1}{n}}\\
&\le & r(\Psi_1 \cdots \Psi_m )^{\alpha} \, r(\Psi_m \cdots \Psi_1 )^{1-\alpha} ,
\label{freiheit}
\end{eqnarray}
\begin{eqnarray}
\nonumber
& & r(S_{\alpha}(\Psi_1)+ \cdots + S_{\alpha}(\Psi_m)) \le r\left(S_{\alpha}(\Psi_1+ \cdots+ \Psi_m)\right) \\
&\le & r\left(S_{\alpha}((\Psi_1+ \cdots+ \Psi_m)^{n})\right)^{\frac{1}{n}}
\le r(\Psi_1+ \cdots+ \Psi_m).
\label{widerstand}
\end{eqnarray}

In particular, we have
\begin{eqnarray}
\nonumber
& & r\left(S_{\alpha}(\Psi_{1}) S_{\alpha}(\Psi_{2})\right) \le r\left((\Psi_1 \Psi_2 )^{(\alpha)} \circ ((\Psi_2 \Psi_1)^*)^{(1-\alpha)}  \right) \\
&\le & r\left(((\Psi_1 \Psi_2 )^n)^{(\alpha)} \circ (((\Psi_2 \Psi_1)^*)^n)^{(1-\alpha)}\right)^{\frac{1}{n}}\le r(\Psi_1 \Psi_2) .
\label{special}
\end{eqnarray}
\label {geom_sym}
\end{proposition}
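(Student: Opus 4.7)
The plan is to recognize that each $S_{\alpha}(\Psi_i)$ is literally a weighted Hadamard geometric mean of two ``column'' sets, with weights $\alpha$ and $1-\alpha$ summing to $1$. Namely, $S_{\alpha}(\Psi_i)=\Psi_i^{(\alpha)}\circ(\Psi_i^{*})^{(1-\alpha)}$, so taking the $k:=m$ rows and the two columns $\Psi_{i,1}:=\Psi_i$, $\Psi_{i,2}:=\Psi_i^{*}$ plugs all three claims directly into Theorems \ref{finally} and \ref{finally2}. Two set-theoretic identities do the bookkeeping: $\Psi_1^{*}\cdots\Psi_m^{*}=(\Psi_m\cdots\Psi_1)^{*}$ (order reversal under adjoint of products) and $\Psi_1^{*}+\cdots+\Psi_m^{*}=(\Psi_1+\cdots+\Psi_m)^{*}$, and I will use the adjoint-invariance $r(\Sigma^{*})=r(\Sigma)$ on $L^{2}(X,\mu)$ for all $r\in\{\rho,\hat{\rho},\rho_{ess},\hat{\rho}_{ess}\}$, which is recorded in the Preliminaries.

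For (\ref{freiheit}), I feed the two-column data $(\Psi_{i,1},\Psi_{i,2})=(\Psi_i,\Psi_i^{*})$ with weights $(\alpha_1,\alpha_2)=(\alpha,1-\alpha)$ into the chain (\ref{lepa}) of Theorem \ref{finally}(i). By construction, the left-hand side of (\ref{lepa}) is precisely $r(S_{\alpha}(\Psi_1)\cdots S_{\alpha}(\Psi_m))$. The three successive upper bounds in (\ref{lepa}) then rewrite term by term as the three upper bounds in (\ref{freiheit}), once the factor $\Psi_1^{*}\cdots\Psi_m^{*}$ is rewritten as $(\Psi_m\cdots\Psi_1)^{*}$ and, at the final step, $r((\Psi_m\cdots\Psi_1)^{*})$ is replaced by $r(\Psi_m\cdots\Psi_1)$. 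The essential version is identical because the hypotheses on $L$ make the essential variants of (\ref{lepa}) available as well.

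For (\ref{widerstand}), the same substitution into the chain (\ref{lepa2}) of Theorem \ref{finally2}(i) gives a string whose left-hand side is $r(S_{\alpha}(\Psi_1)+\cdots+S_{\alpha}(\Psi_m))$. Here the key point is that $\Psi_1^{*}+\cdots+\Psi_m^{*}=(\Psi_1+\cdots+\Psi_m)^{*}$ (without any order reversal), so the first and second upper bounds in (\ref{lepa2}) are exactly $r(S_{\alpha}(\Psi_1+\cdots+\Psi_m))$ and $r(S_{\alpha}((\Psi_1+\cdots+\Psi_m)^{n}))^{1/n}$, respectively. The last upper bound collapses, via adjoint-invariance of $r$, to the single factor $r(\Psi_1+\cdots+\Psi_m)^{\alpha+(1-\alpha)}=r(\Psi_1+\cdots+\Psi_m)$. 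Finally, (\ref{special}) is merely the specialization $m=2$ of (\ref{freiheit}) and requires no separate argument.

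I do not anticipate a real obstacle: every line is a rewriting of a line of Theorem \ref{finally} or Theorem \ref{finally2}. The only mild subtlety is the order reversal $\Psi_1^{*}\cdots\Psi_m^{*}=(\Psi_m\cdots\Psi_1)^{*}$ in the multiplicative case, which is why the upper bound in (\ref{freiheit}) involves $r(\Psi_m\cdots\Psi_1)$ rather than $r(\Psi_1\cdots\Psi_m)$, whereas the additive case of (\ref{widerstand}) suffers no such reversal and thus the final bound is a single spectral radius to the power $1$.
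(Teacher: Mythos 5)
Your proposal is correct and follows essentially the same route as the paper: both treat $S_{\alpha}(\Psi_i)=\Psi_i^{(\alpha)}\circ(\Psi_i^{*})^{(1-\alpha)}$ as a two-column weighted Hadamard geometric mean and feed it into Theorem \ref{finally}(i) for (\ref{freiheit}) and Theorem \ref{finally2}(i) for (\ref{widerstand}), using $\Psi_1^{*}\cdots\Psi_m^{*}=(\Psi_m\cdots\Psi_1)^{*}$ and $r(\Sigma^{*})=r(\Sigma)$. The only cosmetic difference is that the paper justifies the final bound in (\ref{special}) by explicitly invoking $r(\Psi_1\Psi_2)=r(\Psi_2\Psi_1)$, which your ``specialization $m=2$'' remark implicitly requires as well.
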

\begin{proof}
By Theorem \ref{finally}(i) we have
\begin{eqnarray}
\nonumber
& & r\left(S_{\alpha} (\Psi_{1}) \cdots  S_{\alpha}(\Psi_{m})\right) =r\left( (\Psi_1 ^{(\alpha)} \circ (\Psi_1^*)^{(1-\alpha)}) \cdots
\left(\Psi_m ^{(\alpha)} \circ (\Psi_m^*)^{(1-\alpha)}\right)\right) \\
\nonumber
&\le &  r\left((\Psi_1 \cdots \Psi_m )^{(\alpha)} \circ ((\Psi_m \cdots \Psi_1)^*)^{(1-\alpha)}  \right)\\
\nonumber
&\le & r\left(((\Psi_1 \cdots \Psi_m )^n)^{(\alpha)} \circ (((\Psi_m \cdots \Psi_1)^*)^n)^{(1-\alpha)}\right)^{\frac{1}{n}}\\
\nonumber
& \le&  r(\Psi_1 \cdots \Psi_m)^{\alpha} \,  r((\Psi_m \cdots \Psi_1)^*)^{1-\alpha} =
 r(\Psi_1 \cdots \Psi_m )^{\alpha} \,  r(\Psi_m \cdots \Psi_1 )^{1-\alpha},
 \end{eqnarray}
  where the last equality follows from  the fact that $r(\Psi)=r(\Psi^*)$. 
  The inequalities in (\ref{widerstand}) are proved in similar way by applying Theorem \ref{finally2} 
  and (\ref{good_corollary}). The first and second inequalities in (\ref{special}) are special cases of (\ref{freiheit}), while the third inequality follows from (\ref{freiheit}) and the fact that $r(\Psi_1\Psi_2)=r(\Psi_2\Psi_1)$.
\end{proof}
Let $\Psi$ be a bounded set of nonnegative matrices that define operators on $l^2(R)$ and let $\alpha$ and $\beta$ be nonnegative numbers such that $\alpha+\beta \ge 1$. The set $S_{\alpha, \beta}(\Psi)=\Psi^{(\alpha)}\circ(\Psi^*)^{(\beta)}=\{A^{(\alpha)}\circ (B^*)^{(\beta)} : A, B \in \Psi\}$ is a bounded set of nonnegative matrices that define operators on $l^2(R)$ by Theorem \ref{thbegin}(ii). 

For $r\in \{ \rho , \hat{\rho}\} $ the following result extends Theorem \ref{...+drnovsek} in the case of bounded set of nonnegative matrices that define operators on $l^2(R)$. It also extends a part of \cite[Theorem 3.5(ii)]{BP21}.
\begin{theorem}
\label{sym_matrices}
Let $\Psi$ be a bounded set of nonnegative matrices that define operators on $l^2(R)$ and $r \in \{ \rho, \hat{\rho}\}$. Assume $\alpha$ and $\beta$  are nonnegative numbers such that $\alpha+\beta \ge 1$ and denote $r_n=r(S_{\alpha,\beta}(\Psi^{2^n}))^{2^{-n}}$ for $n \in \mathbb{N}\cup \{0\}$. Then we have
\be
r(S_{\alpha, \beta}(\Psi))=r_{0}\le r_1\le \cdots \le r_n\le r(\Psi)^{\alpha+\beta}.
\label{finish}
\ee
\begin{proof}
 By Theorem \ref{for_matrices} we have
\be
r(S_{\alpha, \beta}(\Psi))=r(\Psi^{(\alpha)}\circ (\Psi^*)^{(\beta)})\le r\left((\Psi^{2^n})^{(\alpha)}\circ ((\Psi^*)^{2^n})^{(\beta)}\right)^{2^{-n}}=r_n\le r(\Psi)^{\alpha+\beta}.
\ee
In particular, for $n=1$ we have
\be
r(S_{\alpha, \beta}(\Psi))^2 \le r(S_{\alpha, \beta}(\Psi^2))\le r(\Psi)^{2(\alpha +\beta)}.
\label{tudi}
\ee
Since $r_{n-1} \le r_{n}$ for all $n \in \mathbb{N}\cup \{0\}$  by the first inequality in (\ref{tudi}), the proof of (\ref{finish}) is completed.
\end{proof}
\end{theorem}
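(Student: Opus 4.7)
The plan is to follow closely the template of Theorem~\ref{...+drnovsek}, replacing Theorem~\ref{powers} throughout by its matrix analogue, Theorem~\ref{for_matrices}. The hypothesis $\alpha+\beta\ge 1$ is precisely what permits this substitution, since for bounded sets of nonnegative matrices Theorem~\ref{for_matrices} allows exponents whose sum is at least $1$ rather than exactly $1$.

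First I would establish the outer sandwich $r(S_{\alpha,\beta}(\Psi))\le r_n\le r(\Psi)^{\alpha+\beta}$ for every $n\ge 0$ in one stroke. Applying Theorem~\ref{for_matrices} to the two bounded sets $\Psi$ and $\Psi^*$ with weights $\alpha$ and $\beta$, and with the integer $n$ there replaced by $2^n$, gives
$$r\bigl(\Psi^{(\alpha)}\circ(\Psi^*)^{(\beta)}\bigr)\le r\bigl((\Psi^{2^n})^{(\alpha)}\circ((\Psi^*)^{2^n})^{(\beta)}\bigr)^{2^{-n}}\le r(\Psi)^{\alpha}\,r(\Psi^*)^{\beta}.$$
The set identity $(\Psi^*)^{2^n}=(\Psi^{2^n})^*$, which follows from $(A_1\cdots A_{2^n})^*=A_{2^n}^*\cdots A_1^*$, identifies the middle term with $r_n$. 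Since $r(\Psi^*)=r(\Psi)$ on $l^2(R)$ for $r\in\{\rho,\hat\rho\}$ (the adjoint preserves the spectral radius and operator norm of each individual matrix, and the order reversal caused by adjunction is neutralized by the cyclic invariance recorded in (\ref{again})), the right-hand side equals $r(\Psi)^{\alpha+\beta}$. Because $r_0=r(S_{\alpha,\beta}(\Psi))$ by definition, the outer bounds in (\ref{finish}) drop out.

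For the monotonicity $r_{n-1}\le r_n$ I would specialize the inequality of the previous step to $n=1$ and then apply that special case with $\Psi$ replaced by $\Psi^{2^{n-1}}$, which is itself a bounded set of nonnegative matrices defining operators on $l^2(R)$. This yields
$$r\bigl(S_{\alpha,\beta}(\Psi^{2^{n-1}})\bigr)^2\le r\bigl(S_{\alpha,\beta}(\Psi^{2^n})\bigr),$$
and extracting $2^{-n}$-th roots gives exactly $r_{n-1}\le r_n$, completing the chain (\ref{finish}).

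The only delicate point, modest as it is, lies in Step~1: one must verify at the level of sets that $r(\Psi^*)=r(\Psi)$. This hinges on the elementwise identities $\rho(A)=\rho(A^*)$ and $\|A\|=\|A^*\|$ on Hilbert space, combined with the cyclic invariance from (\ref{again}) that absorbs the order reversal in $(\Psi^m)^*$. Once that is in hand, everything else is bookkeeping with the dyadic exponent $2^n$.
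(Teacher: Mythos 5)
Your proposal is correct and follows essentially the same route as the paper: apply Theorem \ref{for_matrices} with exponent $2^n$ to the pair $\Psi,\Psi^*$ (using $(\Psi^*)^{2^n}=(\Psi^{2^n})^*$ and $r(\Psi^*)=r(\Psi)$) for the outer bounds, then obtain monotonicity by specializing to $n=1$ and replacing $\Psi$ with $\Psi^{2^{n-1}}$. The only difference is that you spell out the adjoint bookkeeping that the paper leaves implicit.
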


The following result is proved in similar way as Proposition \ref{geom_sym} using Theorem \ref{finally}(ii) instead of Theorem \ref{finally}(i).
\begin{proposition}
Let $\Psi$, $\Psi_1, \ldots ,\Psi_m$ be bounded sets of nonnegative matrices that define operators on $l^2(R)$, $n \in \mathbb{N}$ and let $\alpha$ and $\beta$ be nonnegative numbers such that $\alpha+\beta \ge 1$. Then we have
\begin{eqnarray}
\nonumber
& & r(S_{\alpha ,\beta} (\Psi_{1}) \cdots  S_{\alpha  ,\beta}(\Psi_{m})) 
\le  r\left((\Psi_1 \cdots \Psi_m )^{(\alpha)} \circ ((\Psi_m \cdots \Psi_1)^*)^{(\beta)}  \right) \\
\nonumber
&\le & r\left(((\Psi_1 \cdots \Psi_m )^n)^{(\alpha)} \circ (((\Psi_m \cdots \Psi_1)^*)^n)^{(\beta)}\right)^{\frac{1}{n}}\\
&\le &  r(\Psi_1 \cdots \Psi_m )^{\alpha} \,  r(\Psi_m \cdots \Psi_1 )^{\beta},
\label{geom_sym_prva}
\end{eqnarray}
\be r(S_{\alpha, \beta}(\Psi)) \le r(S_{\alpha, \beta}(\Psi ^n))^{\frac{1}{n}}\le r(\Psi)^{\alpha+\beta},
\label{geom_sym_druga}
\ee
\begin{eqnarray}
\nonumber
& & r(S_{\alpha  ,\beta}(\Psi_1)+ \cdots + S_{\alpha  ,\beta}(\Psi_m)) \le r\left(S_{\alpha  ,\beta}(\Psi_1+ \cdots+ \Psi_m)\right) \\
&\le & r\left(S_{\alpha  ,\beta}((\Psi_1+ \cdots+ \Psi_m)^{n})\right)^{\frac{1}{n}} \le r(\Psi_1+ \cdots+ \Psi_m)^{\alpha + \beta},
\label{geom_sym_treca}
\end{eqnarray}
\begin{eqnarray}
\nonumber
& & r(S_{\alpha ,\beta}(\Psi_1)S_{\alpha ,\beta}(\Psi_2))\le r\left((\Psi_1 \Psi_2 )^{(\alpha)} \circ ((\Psi_2 \Psi_1)^*)^{(\beta)}  \right) \\
&\le & r\left(((\Psi_1 \Psi_2 )^n)^{(\alpha)} \circ (((\Psi_2 \Psi_1)^*)^n)^{(\beta)}\right)^{\frac{1}{n}}
\le r(\Psi_1\Psi_2)^{\alpha+\beta}
\label{geom_sym_cetvrta}
\end{eqnarray}
for $r \in \{ \rho, \hat{\rho}\}$.
\end{proposition}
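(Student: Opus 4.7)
The plan is to mirror the proof of Proposition \ref{geom_sym} line by line, substituting Theorem \ref{finally}(ii) and Theorem \ref{for_matrices} for their kernel-operator counterparts Theorem \ref{finally}(i) and Theorem \ref{powers}. The hypothesis $\alpha+\beta\ge 1$ precisely matches the assumption $\sum_j\alpha_j\ge 1$ required by the matrix versions. Throughout I would use the identity $r(\Psi^*)=r(\Psi)$ for $r\in\{\rho,\hat{\rho}\}$ on $l^2(R)$, which is immediate since $\|A^*\|=\|A\|$ and $\rho(A^*)=\rho(A)$ for any bounded operator on $l^2(R)$, so the quantities (\ref{genrho}) and (\ref{BW}) are insensitive to taking adjoints; moreover $(\Psi^*)^n=(\Psi^n)^*$ and $(\Psi_1\cdots\Psi_m)^*=\Psi_m^*\cdots\Psi_1^*$.

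First I would establish (\ref{geom_sym_prva}) by applying Theorem \ref{finally}(ii) to the array $\{\Psi_{i,j}\}_{i=1,\,j=1}^{m,\,2}$ defined by $\Psi_{i,1}=\Psi_i$, $\Psi_{i,2}=\Psi_i^*$, with exponents $\alpha_1=\alpha$ and $\alpha_2=\beta$. The left-hand side of (\ref{lepa}) then reads $r(S_{\alpha,\beta}(\Psi_1)\cdots S_{\alpha,\beta}(\Psi_m))$, the middle terms give the stated Hadamard-product refinements after rewriting $\Psi_m^*\cdots\Psi_1^*=(\Psi_m\cdots\Psi_1)^*$, and the chain terminates in $r(\Psi_1\cdots\Psi_m)^\alpha\, r((\Psi_m\cdots\Psi_1)^*)^\beta=r(\Psi_1\cdots\Psi_m)^\alpha\, r(\Psi_m\cdots\Psi_1)^\beta$.

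Next, (\ref{geom_sym_druga}) is a direct application of Theorem \ref{for_matrices} (with $m=2$, sets $\Psi$ and $\Psi^*$, and exponents $\alpha,\beta$), using $(\Psi^*)^n=(\Psi^n)^*$ and $r(\Psi^*)=r(\Psi)$ once more. For (\ref{geom_sym_treca}), note that any element of $S_{\alpha,\beta}(\Psi_1)+\cdots+S_{\alpha,\beta}(\Psi_m)$ has the form $\sum_{i=1}^m A_i^{(\alpha)}\circ(B_i^*)^{(\beta)}$ with $A_i,B_i\in\Psi_i$, which by (\ref{mitr2}) applied entrywise to the matrices is dominated by $\bigl(\sum_i A_i\bigr)^{(\alpha)}\circ\bigl((\sum_i B_i)^*\bigr)^{(\beta)}\in S_{\alpha,\beta}(\Psi_1+\cdots+\Psi_m)$. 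Since entrywise domination of nonnegative matrices is preserved under products and the operator norm and spectral radius are monotone on the positive cone (exactly the mechanism already employed at the start of the proof of Theorem \ref{finally}), this yields the first inequality of (\ref{geom_sym_treca}); the remaining two follow from (\ref{geom_sym_druga}) applied to the set $\Psi_1+\cdots+\Psi_m$. Finally, (\ref{geom_sym_cetvrta}) is obtained as the $m=2$ case of (\ref{geom_sym_prva}) combined with the identity $r(\Psi_2\Psi_1)=r(\Psi_1\Psi_2)$ from (\ref{again}), collapsing $r(\Psi_1\Psi_2)^\alpha r(\Psi_2\Psi_1)^\beta$ to $r(\Psi_1\Psi_2)^{\alpha+\beta}$.

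I expect no substantive obstacle here: each step parallels one already carried out in Proposition \ref{geom_sym}, and Theorem \ref{finally}(ii) was designed precisely to cover exponent sums at least $1$, which is the only thing that changes relative to the kernel-operator case. The single point that merits a quick verification is the entrywise-domination lemma for bounded \emph{sets} of nonnegative matrices—namely that $A\le C$ entrywise implies $\|A\|_{l^2}\le\|C\|_{l^2}$ and $\rho(A)\le\rho(C)$, and that this passes to products—but this is standard and already implicit in the earlier proofs, so the write-up reduces to bookkeeping.
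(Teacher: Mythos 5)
Your proposal is correct and follows essentially the same route as the paper: the authors likewise obtain (\ref{geom_sym_prva}) and (\ref{geom_sym_treca}) by repeating the proof of Proposition \ref{geom_sym} with Theorems \ref{finally}(ii) and \ref{finally2}(ii) in place of their kernel-operator counterparts, and treat (\ref{geom_sym_druga}) and (\ref{geom_sym_cetvrta}) as special cases of (\ref{geom_sym_prva}). Your minor variants (citing Theorem \ref{for_matrices} directly for (\ref{geom_sym_druga}) and unwinding (\ref{mitr2}) rather than quoting Theorem \ref{finally2}(ii)) invoke the same underlying machinery and are equally valid.
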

\begin{proof}
Inequalities (\ref{geom_sym_prva}) and (\ref{geom_sym_treca}) are proved in a similar way as inequalities (\ref{freiheit}) and (\ref{widerstand}) by using Theorems \ref{finally}(ii) and \ref{finally2}(ii). 
Inequalities (\ref{geom_sym_druga}) and (\ref{geom_sym_cetvrta}) are special cases of (\ref{geom_sym_prva}).
\end{proof}

\bigskip

\noindent {\bf Acknowledgements.}
The first author acknowledges a partial support of Erasmus+ European Mobility program (grant KA103), COST Short Term Scientific Mission program (action CA18232) and the Slovenian Research Agency (grants P1-0222 and P1-0288). The first author thanks the colleagues and staff at the Faculty of Mechanical Engineering and Institute of Mathematics, Physics and Mechanics for their hospitality during the research stay in Slovenia. The second author acknowledges a partial support of  the Slovenian Research Agency (grants P1-0222, J1-8133 and J2-2512).

\bibliographystyle{amsplain}

\end{document}